\makeatletter \@addtoreset{equation}{section} \makeatother
\newtheorem{theorem}{Theorem}[section]
\newtheorem{corollary}{Corollary}[section]
\newtheorem{lemma}{Lemma}[section]
\newtheorem{proposition}{Proposition}[section]
\newtheorem{definition}{Definition}[section]
\newtheorem{alg}{Algorithm}
\renewcommand*{\@biblabel}[1]{\hfill#1.}
\title{A proximal point algorithm with generalized proximal distances to BEPs}
\author{Bento, G. C.  \thanks{IME, Universidade Federal de Goi\'as, Goi\^ania, GO 74001-970, BR ({\tt glaydston@mat.ufg.br}).}
\and 
Cruz Neto, J. X. 
\thanks{CCN, DM, Universidade Federal do Piau\'i,
Terezina, PI 64049-550, BR ({\tt jxavier@ufpi.edu.br})}
\and 
Lopes, J. O. 
\thanks{CCN, DM, Universidade Federal do Piau\'i,
Terezina, PI 64049-550, BR ({\tt jurandir@ufpi.edu.br}).}
\and
Soares Jr, P.A. \thanks{CCN, DM, Universidade Etadual do Piau\'i, Terezina, PI 64002-150, BR ({\tt pedrosoares@uespi.br})}
\and
Soubeyran, A.  \thanks{Aix-Marseille University (Aix-Marseille School of Economics), CNRS \&  EHESS, FR, ({\tt antoine.soubeyran@gmail.com})}
}
\begin{document}
\maketitle

\begin{abstract}
We consider a bilevel problem involving two monotone equilibrium \mbox{bifunctions} and we show that this problem can be solved by a  \mbox{proximal} point method with generalized proximal distances. We propose a framework for the convergence analysis of the sequences generated by the algorithm. This class of problems is very interesting because it covers mathematical programs and optimization problems under equilibrium constraints. As an application, we consider the problem of the stability and change dynamics of task's allocation in a hierarchical organization.

{\bf Keywords:} equilibrium problem, bilevel problem, proximal algorithms, proximal distance, variational rationality


\end{abstract}


\pagestyle{myheadings}
\thispagestyle{plain}
\section{Introduction}
 Consider the problem of the stability and change dynamics of task's allocation in a hierarchical organization. This
 is, among others, a crucial point for the dynamics of organizations in
 Economics and Management Sciences. At a higher level, there is a large literature about stays and changes dynamics, starting with the work in Economics presented by Schumpeter in~\cite{S2002a, S2002b}, and in Management Sciences by Nelson and Winter in~\cite{NW1982}, within an evolutionary perspective inspired by the theory of evolution in biology. These dynamics abound (see, for
 example, Leana and Barry~\cite{LB2000}). At the organizational level, they said \textquotedblleft \ldots organizations and individual employees  increasingly are pursuing change in how work is organized, how it is
 managed and in who is carrying it out. At the same time, there are numerous
 individual, organizational, and societal forces promoting stability in work
 and employment relations". In this article, the authors examine
 \textquotedblleft change and stability and the forces pushing individuals
 and organizations to pursue both"  and \textquotedblleft some level
 of tension between stability and change is an inevitable part of
 organizational life\ldots ". To hope to solve this very important problem
 for the survival and dynamic efficiency of organizations, the most important
 step is to embed this problem in a larger one. We give an answer to this
 dynamic task's allocation problem in the following way: we propose first a
 simple model of task's allocation in a hierarchical organization. Then, we
 use a recent Variational rationality approach presented by Soubeyran in~\cite{S2009,S2010} as a required enlarged framework which modelizes and unifies a lot of worthwhile stability and change dynamics which end in variational
 traps.
 
At the mathematical level, given a nonempty subset $K$ of $\mathbb{R}^n$ and $f,h: K \times K\to \mathbb{R}$ two bifunctions satisfying the property $f(x,x) =0,\ h(x,x)=0$, $x\in K$, the present paper considers the following bilevel equilibrium problem: 
\begin{equation}\label{BEP}
\mathbf{(BEP)}\qquad \mbox{find}~~\bar{x}\in S(f,K)~\mbox{such that}~h(\bar{x%
},y)\geq 0, \ \forall y\in S(f,K),
\end{equation}%
where $S(f,K)=\{u\in K:f(u,y)\geq 0, \forall y\in K\}$.
In Behavioral Sciences, this means that a leader of an organization can
choose, among all of them, an equilibrium $\bar{x}$ $\in $ $S(f,K)$, which
is preferred by his followers to all his other equilibria, i.e., such that $h(%
\bar{x},y)\geq 0$, $y\in S(f,K)$. This defines a hierarchical
equilibrium, a stability issue where the leader and all his followers prefer
to stay than to move (a hierarchical stability condition).

The bilevel equilibrium problem BEP has been widely studied and is a very active field of research. One of the motivations is that it covers optimization problems and mathematical programs with equilibrium constraints. These problems were addressed by Luo et al. in~\cite{LPD1996} and Migdalas et al. in~\cite{MPV1988}. Bilevel problems have first been formalized as optimization problems in the early 1970s by Bracken and McGill in~\cite{BM1973}. 

In the linear setting, some authors have presented iterative processes to approximate a solution of Bilevel problems. Cabot in~\cite{C2005} built an algorithm which is able to minimize hierarchically several functions over their successive argmin sets. 
Moudafi in~\cite{M2010} presented a proximal method for a class of monotone BEPs. More recently, Ding in \cite{D2010} used the auxiliary problem principle to BEPs. 
In this paper, under the hypothesis of monotonicity, we present a proximal algorithm with generalized distances for BEP. One of the reasons for using generalized distances instead of Euclidean norms is that the calculations and equations can be greatly simplified by an appropriate choice of a generalized distance that allows us to explore the geometry of the constraints. A broad explanation about generalized proximal distances is given by Auslender and Teboulle in~\cite{AT2006}, Burachik and Dutta~\cite{BD2010} and references therein. We point out that our algorithm retrieves and generalizes the proximal point method for bilevel equilibrium problems presented in~\cite{M2010}.

The organization of our paper is as follows. In Section 2, we present some notions of worthwhile stay and change dynamics of task's
allocation within an organization and give some elementary facts on generalized distances needed for reading this paper. In Section~3, we present basic hypotheses for equilibrium problems. In Section 4, we consider a generalized proximal distance as a regularization term. In Section 5, we consider a proximal point algorithm with generalized proximal distances to solve bilevel equilibrium problems, we derived a convergence analysis and examine the problem of the stability and change dynamics of task's allocation in a hierarchical organization. 

\section{Worthwhile stay and change dynamics of task's allocation within an organization}

Let us consider  how, in a dynamical setting, a leader of
a hierarchical firm can manage the allocation of a given set of tasks between
different workers to produce a final good whose quantity and quality can be
chosen, each period, in order to match better and better the consumer needs
to finally ends in a mature product. At this end, the entrepreneur stops to
change the allocation of tasks, because it is not worthwhile to innovate
more. This is a typical problem of task's allocation for an organization. A
large literature exists on this topic in Management Sciences. However, very
few dynamically formalized models exist. A static model is easy to give, using
a leader-follower formulation as a bilevel equilibrium problem. What is
really difficult is to give a dynamic formulation. This have been done by Bento et al. in~\cite{BCSS2014}, using a recent  Variational rationality approach of stay and change dynamics (see \cite{S2009, S2010}) which unifies a lot of
different points of views related to stability and change dynamics in
different disciplines (Psychology, Economics, Management Sciences, Political
Economy, Decision theory, Game theory, Artificial Intelligence,~\dots). 

\subsection{A static model of task's allocation in a hierarchical
organization}

For a hierarchical organization with a leader $l$ and several
followers $j\in J$, where $J$ is the set of followers, there are two polar cases: 
\begin{itemize}
\item[i)] authority: the leader $l$ chooses the collective action $x=(x^{l},x^{J})\in X$, where $X=X^l\times X^J$ is the set of pairs of actions  of the leader and the followers, where $x^l$ belonging to $X^l$ is the action of the leader  and $x^J$ belonging to $X^J$ is the profile of actions of the followers.  
\item[ii)] delegation: the leader $l$ chooses first action $x^{l}\in X^{l}$ and
the followers carry out the profile of actions $x^{J}=\left\{ x^{j}: j\in
J\right\} \in X^J$ in their interests, or best interests.
\end{itemize}
The subset of followers $J$ can be chosen or not, depending of the model. In
the first case, the size of the firm is a choice variable as well as the allocation of tasks. Then, in a dynamic setting, the leader is allowed to hire and to fire followers (other different formulations of an organization are presented by Bento and Soubeyran in~\cite{BS2014}, Bao et al. in~\cite{BMS2014}, and also in~\cite{S2009,S2010}). This contrasts with this paper where the size $J$ of
the firm is a given. The topic is only the allocation of tasks between the
leader and all the given followers as specialized skilled workers. Let $
I=\left\{ 1,2,..,i,..,n\right\}$ be the list of different tasks available
to the leader and the followers. In this context, the action $x^{j}=(x_{i}^{j},i\in I)\in \mathbb{R}_{+}^{n}$ of agent $j$ can be identified to
the vector of effort levels $x_{i}^{j}\geq 0$ this agent $j$ spends in doing
each task $i\in I$. The leader can choose the allocation of tasks $%
x=(x^{l},x^{J})\in X$ and the means $m(x)=(m^{l}(x^{l}),m^{j}(x^{j}),j\in
J $) allowed to him and to each worker to perform their different tasks.
This allows him to choose the quantity $q(x)$ and the quality $s(x)$ of this
final good. The revenue of the entrepreneur is $\varphi \left[ \mathfrak{q}%
(x),s(x)\right]$. His operational costs $\rho \left[ m(x)\right] +w^{J}(x)$
are the sum of his costs $\rho \left[ m(x)\right] $ to acquire the required
means $m(x)$, and the sum of the wages $w^{J}(x)=\Sigma _{j\in J}$ $w^{j}(x)$
paid to each employed worker $j\in J$. In this model, individual wages $%
w^{j}(x)$ depend of the \ profile $x$ of efforts of all workers (team
incentives). Individual incentives $w^{j}(x^{j}),j\in J,$ work as well (as a
variant). Then, in a given period, the profit of the entrepreneur is $%
g^{l}(x)=\varphi \left[ \mathfrak{q}(x),s(x)\right] -\rho \left[ m(x)\right]
-w^{J}(x)\in \mathbb{R}$. The net payoff of each skilled employed worker $j\in J$ is $%
g^{j}(x)=w^{j}(x)-\delta ^{j}(x^{j})$ where $\delta ^{j}(x^{j})\geq 0$ is
the disutility of effort for worker $j$. Let $g^{J}(x)=\Sigma _{j\in
J}g^{j}(x)=w^{J}(x)-\delta ^{J}(x)$ be the sum of all the net payoffs of all
different workers, where $\delta ^{J}(x)=\Sigma _{j\in J}$ $\delta
^{j}(x^{j})$ is their total disutility of efforts. The weighted payoff of
the organization is $g(x)=\varepsilon g^{l}(x)+g^{J}(x),$ where $\varepsilon
>0$ is the weight allowed to the profit of the leader. A famous example of an endogenous production function of quality is presented by Kremer in~\cite{K1993}. The
bilevel equilibrium problem defined in this paper requires authority (and full knowledge of the profile of efforts) for the leader $l$ of an
organization, because he is allowed to choose the entire vector of efforts $%
x=(x^{l},x^{J})\in X$.

\subsection{Variational rationality: how successions of worthwhile stays
and changes end in variational traps}

The VR variational rationality approach exami-nes stability and change
dynamics of human behaviors (see, \cite{S2009, S2010}). It focuses the
attention on three main concepts: worthwhile changes, worthwhile
transitions, and traps (aspiration points, stationary traps and variational
traps). The definition and modelization of these leading concepts requires
to define a list of intermediary concepts. Each of them needs lengthy
discussions for suitable applications in different disciplines. Let us
consider a general and unspecified formulation of the VR approach given in~\cite{S2009,S2010}, which allows a lot of more specific formulations. Then,
we will apply it to our specific, but important, example.

\subsubsection*{Worthwhile temporary stays and change dynamics}

The past, current and future periods are $k,k+1,k+2.$ The universal space of
actions is $X$. It represents all possible past, present and future actions
which can be discovered as time evolves. The past and current actions are $%
x^{k}\in X$ and $x=x^{k+1}\in X$. The experience of the agent at the end of
the past period $k$ is $e_{k}\in E$, where $E$ is the set of  feasible experiences
the agent can eventually acquire.
 
\begin{enumerate}
\item[(a)] \textbf{Worthwhile stay and change transition.} The  VR approach, see \cite{S2009, S2010}, modelizes a lot of behavioral
dynamics as a succession $x^{k+1}\in W_{e_{k},\xi _{k+1}}(x^{k})$, $k\in \mathbb{N}$, of
worthwhile transitions entwining temporary stays $x^{k+1}= x^{k}$ and
changes $x^{k+1}\neq x^{k},$ ending in variational traps $x^{\ast }\in X$ (to be defined below). An point $x$ of the universal space $X$ \ can be an
action (doing), or a state (having, or being). For the agent, a \ change $%
x^{k}$ $\curvearrowright x^{k+1}\in W_{e_{k},\xi _{k+1}}(x^{k})$ is
worthwhile, when his ex ante motivation to change $M_{e_{k}}(x^{k},x^{k+1})$
is sufficiently higher (more than a chosen satisficing level  $\xi _{k+1}>0$) \ than his ex
ante resistance to change, $R_{e_{k}}(x^{k},x^{k+1})$. Then, it is worthwhile to change from $x^{k}$ to $x^{k+1}$ iff 
\[
x^{k+1}\in W_{e_{k},\xi _{k+1}}(x^{k})\Longleftrightarrow
M_{e_{k}}(x^{k},x^{k+1})\geq \xi_{k+1}R_{e_{k}}(x^{k},x^{k+1}),
\]
i.e., 
\[
M_{e_{k}}(x^{k},x^{k+1})/R_{e_{k}}(x^{k},x^{k+1})\geq \xi_{k+1}, \qquad x^{k+1}\neq x^{k},
\]
where $\xi_{k+1}>0$ represents a satisficing (high enough) worthwhile to
change ratio.

Motivation and resistance to change are two complex variational concepts
which admit a lot of variants; for details see ~\cite{S2009, S2010}).
Motivation to change 
\[M_{e_{k}}(x^{k},x^{k+1})=U_{e_{k}}\left[
A_{e_{k}}(x^{k},x^{k+1})\right] 
\] 
is the utility $U_{e_{k}}\left[\cdot \right] $
of advantages to change, $A_{e_{k}}(x^{k},x^{k+1}),$ while resistance to
change 
\[
R_{e_{k}}(x^{k},x^{k+1})=D_{e_{k}}\left[ I_{e_{k}}(x^{k},x^{k+1})\right]
\]
is the disutility $D_{e_{k}}\left[\cdot \right] $ of inconvenients to
change $I_{e_{k}}(x^{k},x^{k+1}).$ A \ worthwhile change is an acceptable
change which balances satisfactions and sacrifices, improvements and costs
of improving, or desirability and feasibility. The famous satisficing principle shown by Simon in~\cite{S1955} is a specific case of satisfactions with no
sacrifices (see~\cite{S2009, S2010}). Marginal worthwhile changes refer to
an \textquotedblleft one step more" change, within the current period.
\item[(b)] \textbf{Worthwhile to change rather than to stay payoff}. It is, in the
current period $k+1$,
\[
\Delta _{e_{k},\xi_{k+1}}(x^{k},x^{k+1})=M_{e_{k}}(x^{k},x^{k+1})-\xi_{k+1}R_{e_{k}}(x^{k},x^{k+1}).
\]
Then, it is worthwhile to change from $x^{k}$ to $x^{k+1}$ iff $\ \Delta _{e_{k},\xi _{k+1}}(x^{k},x^{k+1})\geq 0$. Let us note, for simplification, $x^{k}=x$, $x^{k+1}=y,e_{k}=e,$ and $\xi
_{k+1}=\xi >0.$

\item[(c)] \textbf{Advantages to change}. Let us consider the following VR concepts. The VR approach considers three levels: the
case of an agent, an organization and interrelated agents.
\begin{itemize}
\item[i)] For an agent, let $A_{e}=A_{e}(x,y)\in \mathbb{R}$ be his advantage to change from 
$x$ to $y$. If $A_{e}(x,y)\geq 0,$ this agent has an advantage to change
from $x$ to $y.$ If $A_{e}(x,y)\leq 0$ he has a disadvantage to change from $%
x$ to $y$, i.e., a loss to change\ $F_{e}(x,y)=-A_{e}(x,y)\geq 0.$ Then,
advantages to change and loss functions are opposite. For example, in the
separable case, if actions $x$ and $y$ generate the to be improved payoffs $g(x),g(y)\in \mathbb{R}$, there is an advantage to change from doing $x$ to do $y$ if $%
g(y)\geq g(x)$, i.e., $A_{e}(x,y)=g(y)-g(x)\geq 0$. There is a loss to
change if $g(y)\leq g(x),$ i.e., 
\[
F_{e}(x,y)=-A_{e}(x,y)=g(x)-g(y)\geq 0.
\]
Let $f(x)$ be the to be decreased unsatisfied need of an agent, defined by $f(x)=\overline{g}
-g(x)\geq 0$, where $\overline{g}=\sup \left\{ g(z),z\in X\right\} <\infty $
is the highest feasible payoff of the agent. Then, after having done action $%
x$, this agent will have an advantage to change from $x$ to $y$ if $f(x)-f(y)\geq 0$, i.e., $A_{e}(x,y)\geq 0$. This means that his unsatisfied need will decrease, $f(y)\leq f(x)$;

\item[ii)] For a hierarchical organization with a leader and several followers,
where $x=(x^{l},x^{j},j\in J)$ and $y=(y^{l},y^{j},j\in J)$ are two
collective actions, let $g^{l}(x),$ $g^{l}(y)$ and $g^{j}(x),g^{j}(y)$, $%
j\in J$ be the \textquotedblleft to be increased" payoffs of the leader and all the followers
at $x$ and $y$. Then, advantages to change of the leader and all the
followers are 
\[
A^{l}(x,y)=g^{l}(y)-g^{l}(x)\geq 0,\ \ \mbox{and} \ \ A^{J}(x,y)=\Sigma_{j\in J}\left[ g^{j}(y)-g^{j}(x)\right] \geq 0,
\]
when they are nonnegative. In the other hand they represent losses to change $A^{l}(x,y)\leq 0$ and $
A^{J}(x,y)\leq 0$ when they are negative or zero. Then, the joint advantage
to change of the organization is
\[
A_{e}(x,y)=\varepsilon
A^{l}(x,y)+A^{J}(x,y),
\]
 where $\varepsilon >0$ is the weight allowed to the
leader payoff;
\item[iii)] The case of interrelated agents (games) has be examined elsewhere.
\end{itemize}
\item[(d)]\textbf{Inconvenients to change}. For an agent, let 
\begin{equation}\label{eq:IC1}
I_{e}(x,y):=C_{e}(x,y)-C_{e}(x,x)\geq 0,
\end{equation}
be his inconvenients to change. It
refers to the difference between his costs $C_{e}(x,y)$ to be able to change
from having the capability to do action $x$ one time more to having the
capability to do action $y$, and his costs to be able to stay, $%
C_{e}(x,x)\geq 0.$ For an organization with a leader $l$ and a given subset
of followers $J$, the costs to be able to change for all the members of the
organization are $C_{e}(x,y)=C_{e}^{l}(x,y)+\Sigma_{j\in
J}C_{e}^{j}(x,y)\in \mathbb{R}_{+}$. Then, the inconvenients to change of the
organization are the sum of the inconvenients to change of all members of
this organization.

\item[(e)]\textbf{Aspiration point. }Given a worthwhile transition $x^{k+1}\in
W_{e_{k},\xi_{k+1}}(x^{k})$, $k\in \mathbb{N}$, $x^{\ast }\in X$ is a strong aspiration
point if $x^{\ast }\in W_{e,_{k}\xi _{k+1}}(x^{k})$,  $k\in \mathbb{N}$. This means
that, starting from any position of the transition, it is worthwhile to
directly reach this aspiration point. Aspiration points are weak if it
exists $k_{0}\in \mathbb{N}$ such that $\ x^{\ast }\in W_{e_{k},\xi _{k+1}}(x^{k})$, $k\geq k_{0}$.

\item[(f)]\textbf{Stationary trap. }Given $e_{\ast }\in E$ and $\xi _{\ast }>0,$ $%
x^{\ast }\in X$\textbf{\ }is a stationary trap if $W_{e_{\ast },\xi _{\ast
}}(x^{\ast })=\left\{ x^{\ast }\right\} $. This means that $\Delta _{e_{\ast
},\xi _{\ast }}(x^{\ast },y)<0$ for all $y\neq x^{\ast }.$

\item[(g)]\textbf{Variational trap}. It is a point $x^{\ast }$ such that, starting
from a given intial point $x^{0}\in X,$ it exists a path of worthwhile
changes $x^{k+1}\in W_{e_{k},\xi _{k+1}}(x^{k})$ which ends in $x^{\ast },$
i.e., such that, being there, it is not worthwhile to move again, i.e., $%
W_{e_{\ast },\xi _{\ast }}(x^{\ast })=\left\{ x^{\ast }\right\} $. A
variational trap is both an aspiration point and a stationary trap.

\item[(h)]\textbf{Variatonal rationality problem. }Starting from $x^{0}\in X,$ find
when a given worthwhile transition $x^{k+1}\in W_{e_{k},\xi
_{k+1}}(x^{k})$,  $k\in \mathbb{N}$, converges to a variational trap $x^{\ast }\in X$. The
sequence of satisficing worthwhile to change ratio $\left\{\xi_{k+1}>0\right\}$ can be given, ex ante, or chosen, in an adaptive way,
each step.

\item[(i)]\textbf{An habituation/routinization process:} It is such that, step by
step, gradually, the agent carries out a more and more similar action. When
a worthwhile to change process converges to a variational trap, this
variational formulation offers a model of trap as the end point of a path of
worthwhile temporary stays and changes.
\end{enumerate}

\subsection{From behavioral sciences notations to mathematics}\label{sec:2.3}
In what follows, assuming that $f$ and $h$ are given as in the introduction of this paper, advantages to change for the leader and all followers are given by: $A_{e}^{l}(x,y)=-f(x,y)\geq 0$ and $A_{e}^{J}(x,y)=-h(x,y)\geq 0$, while their losses to change are $F_{e}^{l}(x,y)=f(x,y)%
\geq 0$ and $F_{e}^{J}(x,y)=h(x,y)\geq 0$ when they are non negative. The
weighted advantages to change for the organization and its weighted losses to change are, respectively, given by:
\begin{equation}\label{eq:ACLC1}
A_{e}(x,y):=-\left[\varepsilon f(x,y)+h(x,y)\right],\qquad F_{e}(x,y):=\varepsilon f(x,y)+h(x,y).
\end{equation}
We consider a simple and linear motivation and
resistance to change structure. In this case the utility of advantages to
change and the disutility of inconvenients to change are identical to
advantages and inconvenients to change, $U_{e}\left[ A_{e}(x,y)\right]
=A_{e}(x,y)$ and $D_{e}\left[ I_{e}(x,y)\right] =I_{e}(x,y)$. Then, the
worthwhile to change payoff of the organization is $\Delta _{e,\lambda
}(x,y)=A_{e}(x,y)-\lambda I_{e}(x,y)=-L_{e,\lambda }(x,y),$ where $%
L_{e,\lambda }(x,y)=F_{e}(x,y)+\lambda I_{e}(x,y).$

Next, we recall some definitions and results associated to the proximal distance and induced proximal distance,  useful in the remainder of the paper which have been handled in~\cite[Definition 2.1 and 2.2]{AT2006} and \cite{BD2010}. 

\begin{definition} A function $d :\mathbb{R}^n \times \mathbb{R}^n \rightarrow  \mathbb{R}_{+} \cup \{\infty \}$ is called a proximal distance with respect to a closed  nonempty convex set $S \subset \mathbb{R}^n$ iff for every fixed $y \in \mbox{\textnormal{int}}\, S$, the following properties hold:
\begin{description}
\item[(i)] $d(\cdot ,y)$ is a proper, lsc convex function and $C^1$ on $\mbox{\textnormal{int}}\, S;$
\item[(ii)] $\mbox{\textnormal{dom}}\, d(\cdot ,y) \subset S,$ and 
$\mbox{\textnormal{dom}}\, \partial _1 d(\cdot ,y) = \mbox{\textnormal{int}}\, S$, where 
$\partial _1 d(\cdot ,y)$ denotes the classical subgradient map of the function $d(\cdot ,y)$
with respect to the first variable.
\end{description}
\end{definition}
The family of functions satisfying this definition is denoted here by $\mathcal{D} (S)$.

Next step is to associate each given $d \in \mathcal{D} (S)$ with a corresponding proximal distance satisfying some desirable properties.
\begin{definition}\label{defH}
Given $d \in \mathcal{D} (S)$. Let $D :\mathbb{R}^n \times  \mathbb{R}^n \rightarrow  \mathbb{R}_{+} \cup \{ \infty \}$ be 
a function such that $\mbox{\textnormal{int}}\, S \times \mbox{\textnormal{int}}\, S \subseteq \mbox{\textnormal{dom}}\, D$.
$D$ is called the induced proximal distance to $d$ iff the following properties hold:
\begin{description}
\item [(H1)] For every $x \in \mbox{\textnormal{int}}\, S$,  $D(x,\cdot )$ is continuous on $\mbox{\textnormal{int}}\, S;$ 
\item [(H2)] $D(x,x) = 0$ for all  $x \in \mbox{\textnormal{int}}\, S;$
\item [(H3)] For all $x \in S$ and $\alpha  \in \mathbb{R}$, the set $\{y \in \mbox{\textnormal{int}}\, S : D(x,y) \leq \alpha \}$ is bounded;
\item [(H4)] For every $x,y \in \mbox{\textnormal{int}}\, S$, it holds that 
 $$\langle z - x, \nabla _1 d(x,y) \rangle \leq D(z,y) - D(z,x) - \gamma D(x,y)$$
for all $z \in S$ and some fixed $\gamma > 0;$
\item [(H5)] If $\{y^k\} \subset \mbox{\textnormal{int}}\, S$ and $y^k \rightarrow y \in S$, then $D(y,y^k) \rightarrow 0;$
\item [(H6)] Let $z \in S$ and $y \in \mbox{\textnormal{int}}\, S$, and take $w:=\alpha z + (1 - \alpha )y$, with $\alpha \in (0,1)$. Then 
\[
D(z,w) + D(w,y) \leq D(z,y). 
\]
\item[(H7)] If $\{x^k\},\{y^k\} \subset \mbox{\textnormal{int}}\, S$ are sequence such that $\{x^k\}$ converges to $x$ and $\{y^k\}$ 
converges to $y$, with $x \not= y$, then $\liminf \limits_{k} D(x^k, y^k) > 0.$
\end{description}
\end{definition}
{\it Remark 2.1.} 
The conditions {\bf{H1, H2, H3, H5}} and {\bf H7} on generalized distances refer to
more technical assumptions. The conditions {\bf H4} and {\bf H6} are related to a weak
form of the triangular inequality. The triangular inequality is a standard
assumption in the VR approach, (see \cite{S2009,S2010}, for a strong
justification).

We denote by $\mathcal{F} (S)$ the set of pairs $(d,D)$ of proximal and induced
proximal distances generalized that satisfies the conditions of Definition \ref{defH}, and we say that $(d,D)$ is a proximal generalized pair associated to $S$. In~\cite[Section 3]{AT2006} and ~\cite[Section 4]{BD2010} the authors give several examples of proximal distances, for instance; Bregman distances, a double regularization, or a second-order homogeneous proximal distances.   

{\it Remark 2.2.}
If $x=y$, it follows from property ${\bf H4}$ that $ \nabla _1 d(x,y)=0.$

Before stating the method, we recall two important facts regarding proximal distances generalized verifying {\bf H6} and {\bf H7}. The proofs of the following two propositions can be found in~\cite{BD2010}.
\begin{proposition}\label{lem-1} Assume that $(d,D)$ verifies {\bf H6} and {\bf H7}. If $\{x^k\} \subset S$ and $\{y^k\} \subset \emph{int}(S)$
 are sequences such that
 $$\lim_{k\rightarrow \infty}~D(x^k, y^k)=0,$$
 and one of the sequences $(\{x^k\}~\mbox{or}~ \{y^k\})$ converges, then the other one also converges to the same limit.
\end{proposition}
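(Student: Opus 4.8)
The plan is to prove Proposition \ref{lem-1} by contradiction, exploiting property \textbf{H7} and the boundedness furnished by \textbf{H6} via \textbf{H3}-type arguments. Without loss of generality, suppose $\{x^k\}$ is the convergent sequence, say $x^k \to x \in S$; the argument is symmetric in the other case after noting that \textbf{H6} with $z=x^k$, $y=y^k$, and an interior point $w$ on the segment gives, in particular, $D(x^k,w)\le D(x^k,y^k)$, which keeps things controlled.

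\medskip

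\noindent\textbf{Step 1: show $\{y^k\}$ is bounded.} Fix any $\bar y \in \operatorname{int} S$. Using \textbf{H6} with $z = \bar y$ (or $z=x$, whichever lies in $S$) and $y = y^k$, and a suitable convex combination $w^k = \alpha x^k + (1-\alpha) y^k \in \operatorname{int} S$, one extracts an inequality of the form $D(\bar y, y^k) \le D(\bar y, y^k)$ — this needs care. The cleaner route is: from $D(x^k,y^k)\to 0$ and $x^k\to x$, pick $k$ large so that $D(x^k,y^k)\le 1$; then $y^k$ lies in the sublevel set $\{y\in\operatorname{int}S : D(x^k,y)\le 1\}$. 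Since the $x^k$ accumulate only at $x$, one combines \textbf{H3} (boundedness of sublevel sets $\{y : D(z,y)\le\alpha\}$ for fixed $z$) with a continuity/uniformity argument to conclude $\{y^k\}$ is bounded. (If \textbf{H3} is only stated for a fixed first argument, one first passes to the limit: by lower semicontinuity-type properties one bounds $D(x, y^k)$ from the data and applies \textbf{H3} at $z = x$.)

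\medskip

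\noindent\textbf{Step 2: identify the limit.} Since $\{y^k\}$ is bounded, every subsequence has a convergent sub-subsequence; let $y^{k_j}\to \tilde y \in S$. Suppose, for contradiction, that $\tilde y \ne x$. Then by \textbf{H7}, applied to the sequences $\{x^{k_j}\}$ (converging to $x$) and $\{y^{k_j}\}$ (converging to $\tilde y\ne x$), we have $\liminf_j D(x^{k_j},y^{k_j}) > 0$, contradicting $D(x^k,y^k)\to 0$. Hence $\tilde y = x$. Since every convergent subsequence of the bounded sequence $\{y^k\}$ has limit $x$, the whole sequence converges to $x$, which is the common limit asserted.

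\medskip

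\noindent The main obstacle I anticipate is \textbf{Step 1}: establishing boundedness of $\{y^k\}$ cleanly. Property \textbf{H3} gives boundedness of sublevel sets $\{y : D(z,y)\le\alpha\}$ only for a \emph{fixed} first argument $z$, whereas here the first argument $x^k$ varies. One must therefore either (a) first bound $D(x,y^k)$ in terms of $D(x^k,y^k)$ and a vanishing error — which is exactly where \textbf{H6} (the weak triangle inequality) earns its keep, since $D(x, y^k)$ can be related to $D(x^k,y^k)$ plus a term controlled by the convergence $x^k\to x$ — or (b) invoke \textbf{H5} to handle the degenerate limit points. Once boundedness is in hand, Step 2 is a routine subsequence-and-contradiction argument driven entirely by \textbf{H7}.
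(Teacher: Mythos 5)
The paper does not prove this proposition itself; it cites Burachik and Dutta \cite{BD2010}, so your attempt has to be measured against that standard argument. Your Step 2 (bounded sequence, every cluster point forced to equal $x$ by \textbf{H7}, hence full convergence) is exactly the right idea. But Step 1 is a genuine gap, and you have correctly identified it as such without closing it. Neither of your proposed repairs works. First, \textbf{H3} is not among the hypotheses of the proposition (only \textbf{H6} and \textbf{H7} are assumed), and even granting it, it bounds the sublevel sets $\{y:D(z,y)\le\alpha\}$ only for a \emph{fixed} first argument $z$, with no uniformity in $z$ and no assumed regularity of $D$ in its first variable, so ``the $x^k$ accumulate only at $x$'' buys you nothing. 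Second, \textbf{H6} cannot ``relate $D(x,y^k)$ to $D(x^k,y^k)$ plus a vanishing error'': it is a chain inequality $D(z,w)+D(w,y)\le D(z,y)$ in which the \emph{first} argument is the same on both sides; it never compares two distances with different first arguments. Third, \textbf{H5} concerns $D(y,y^k)\to0$ for an already convergent $\{y^k\}$ and says nothing about boundedness.

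The missing idea is a rescaling-plus-contradiction argument using only \textbf{H6} and \textbf{H7}. Suppose $x^k\to x$ but $\{y^k\}$ is unbounded; pass to a subsequence with $\|y^{k_j}-x^{k_j}\|\to\infty$ and set $w^{k_j}=\alpha_{k_j}x^{k_j}+(1-\alpha_{k_j})y^{k_j}$ with $\alpha_{k_j}$ chosen so that $\|w^{k_j}-x^{k_j}\|=1$. By \textbf{H6}, $D(x^{k_j},w^{k_j})\le D(x^{k_j},y^{k_j})\to0$, while $\{w^{k_j}\}$ is bounded and every cluster point $w$ satisfies $\|w-x\|=1$, i.e.\ $w\ne x$; then \textbf{H7} gives $\liminf_j D(x^{k_j},w^{k_j})>0$, a contradiction. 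This is where \textbf{H6} actually earns its keep, and the same midpoint device also repairs the smaller flaw in your Step 2: \textbf{H7} is stated for sequences in $\operatorname{int} S$, whereas $\{x^k\}$ is only assumed to lie in $S$, so one applies \textbf{H7} to the interior convex combinations rather than to $\{x^k\}$ itself.
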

\begin{proposition}\label{lem2} Assume that $(d,D)$ verifies {\bf H6} and {\bf H7}. If $\{x^k\} \subset S$ and $\{y^k\} \subset \emph{int}(S)$
 are sequences such that
 $$\lim_{k\rightarrow \infty}~D(x^k, y^k)=0,$$
 and that one of the sequences $(\{x^k\}~\mbox{or}~ \{y^k\})$ is bounded. Then the following hold:
 \begin{description}
 \item[(a)] The other sequence is also bounded.
 \item[(b)] $\lim_{k\rightarrow \infty}~(x^k- y^k)=0$.
 \end{description}
\end{proposition}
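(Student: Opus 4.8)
The plan is to deduce this result from Proposition~\ref{lem-1}, the weak triangle inequality \textbf{H6}, and the elementary fact that, $S$ being closed and convex, a convex combination $\alpha a+(1-\alpha)b$ with $a\in\mbox{int}\,S$, $b\in S$ and $\alpha\in(0,1)$ again lies in $\mbox{int}\,S$.

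\emph{Part (a).} Suppose first that $\{y^k\}\subset\mbox{int}\,S$ is bounded and, arguing by contradiction, that $\{x^k\}$ is unbounded. Passing to a subsequence we may assume $\|x^{k_j}\|\to\infty$ and, refining once more, $y^{k_j}\to y\in S$; then $\|x^{k_j}-y^{k_j}\|\to\infty$. For $j$ large put $\alpha_j:=1/\|x^{k_j}-y^{k_j}\|\in(0,1)$ and $w^{j}:=\alpha_j x^{k_j}+(1-\alpha_j)y^{k_j}$, so that $w^{j}\in\mbox{int}\,S$ and $\|w^{j}-y^{k_j}\|=1$. Applying \textbf{H6} with $z=x^{k_j}$, $y=y^{k_j}$, $w=w^{j}$ gives $D(w^{j},y^{k_j})\le D(x^{k_j},y^{k_j})\to 0$; since $\{y^{k_j}\}\subset\mbox{int}\,S$ converges to $y$, Proposition~\ref{lem-1} forces $w^{j}\to y$, which together with $y^{k_j}\to y$ contradicts $\|w^{j}-y^{k_j}\|=1$. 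The case in which $\{x^k\}$ is bounded and $\{y^k\}$ unbounded is analogous: now take $w^{j}$ on the segment joining $x^{k_j}$ and $y^{k_j}$ at unit distance from $x^{k_j}$ (i.e. $\alpha_j=1-1/\|x^{k_j}-y^{k_j}\|$, again $w^j\in\mbox{int}\,S$), so \textbf{H6} yields $D(x^{k_j},w^{j})\to 0$, Proposition~\ref{lem-1} applied to $\{x^{k_j}\}\subset S$ and $\{w^{j}\}\subset\mbox{int}\,S$ gives $w^{j}\to x:=\lim x^{k_j}$, and $\|w^{j}-x^{k_j}\|=1$ is the contradiction.

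\emph{Part (b).} By (a) both $\{x^k\}$ and $\{y^k\}$ are bounded, so it suffices to show that every subsequence of $\{x^k-y^k\}$ has a further subsequence tending to $0$. Given such a subsequence, use boundedness of $\{x^k\}$ and closedness of $S$ to extract $x^{k_j}\to x\in S$; since $D(x^{k_j},y^{k_j})\to 0$ and $\{y^{k_j}\}\subset\mbox{int}\,S$, Proposition~\ref{lem-1} gives $y^{k_j}\to x$, whence $x^{k_j}-y^{k_j}\to 0$. Hence $x^k-y^k\to 0$.

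The delicate step is the construction in part (a): the weight $\alpha_j$ must be chosen so that (i) the auxiliary point $w^{j}$ stays at a fixed positive distance from the cluster point of the bounded sequence, which is exactly what will contradict Proposition~\ref{lem-1}; (ii) $w^{j}$ genuinely lies in $\mbox{int}\,S$, so that \textbf{H6} and Proposition~\ref{lem-1} are applicable; and (iii) \textbf{H6} is invoked with the correct ordering of its arguments, so that the term $D(x^{k_j},y^{k_j})$ — the one known to vanish — dominates the term involving $w^{j}$ that we wish to discard. The remaining steps are routine extractions of convergent subsequences.
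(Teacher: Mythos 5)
Your proof is correct. Note that the paper itself gives no argument for this proposition: it only cites Burachik and Dutta for the proofs of Propositions~\ref{lem-1} and~\ref{lem2}, so there is no internal proof to compare against; your derivation from \textbf{H6}, \textbf{H7} and Proposition~\ref{lem-1} is a valid self-contained substitute and follows the same standard line as the cited source (contradiction via an auxiliary point on the segment at unit distance from the bounded side). The auxiliary facts you use implicitly all hold in this setting: $D$ takes values in $\mathbb{R}_{+}\cup\{\infty\}$, so the unwanted term in the \textbf{H6} inequality can be discarded; $S$ is closed by definition, so limits of bounded subsequences of $\{x^k\}$ or $\{y^k\}$ lie in $S$; $\mbox{int}\,S\times\mbox{int}\,S\subseteq\mbox{dom}\,D$, so the quantities $D(w^j,y^{k_j})$ and $D(x^{k_j},w^j)$ are well defined; and the case $\mbox{int}\,S=\emptyset$ is vacuous, so the convex-combination fact placing $w^j$ in $\mbox{int}\,S$ applies. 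Your argument-ordering check for \textbf{H6} (first slot in $S$, second in $\mbox{int}\,S$, $w=\alpha z+(1-\alpha)y$) is also consistent with how Proposition~\ref{lem-1} is stated, so the proposal stands as written.
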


\subsubsection*{Costs to be able to change} In this paper costs to be able to
change from the current profile of efforts $x=x^{k+1}$ to the future
profile $y$, given the experience $e=x^{k}$ of the agents,  are given by
\begin{equation}\label{eq:C1}
C_{e}(x,y)=\langle c_{e}(x),y-x\rangle+C_{e}(x,x),
\end{equation} 
where  $C_{e}(x,x)$ is the cost to be
able to stay,  i.e., cost to be able to repeat the same
current effort levels $x$ and $\langle c_{e}(x),y-x\rangle$ is the marginal cost to be able to change which are costs to be able to increase or decrease the
current effort levels, from $x$ to $y-x$. Note that, from  $\eqref{eq:C1}$,  the inconvenient to change, defined in $\eqref{eq:IC1}$, reduces
\begin{equation}\label{eq:IC2}
I_{e}(x,y)=\langle c_{e}(x),y-x\rangle.
\end{equation}
Given the generalized distance $%
d(x^{k},x)$ from the old \ profile of effort levels $x^{k}$ to the current
one $x=x^{k+1},$ the marginal costs to be able to change are $%
c_{k}(x)=\nabla d_{2}(x^{k},x)$. We have reversed the mathematical notation
from $\nabla d_{1}(x,x^{k})$ to $\nabla d_{2}(x^{k},x)$ to emphasize that \
the move is from $x^{k}$ to $x=x^{k+1}$. Generalized distances are not
symmetric, as required. They have regularity properties which are natural
for costs to be able to change. Furthermore, $C_{e}(x,x)\geq 0$ are usually
different from zero.

\section{The standards assumptions for equilibrium bifunctions}

Now, we pre-sent our basic assumptions associated to a given bifunction $\psi:K\times K\rightarrow \mathbb{R}$ and a proximal generalized pair  $(d,D)$. We assume that $K \subset \mbox{int(dom}\; d(\cdot,y))$ for all $y\in \mbox{int}(S)$.  For each $x,y\in K$ given,

\begin{description}
\item[$(L1)$]$\psi(x, x)=0$;
\item[$(L2)$]$\psi(\cdot, y) : K \longrightarrow  \mathbb{R}$ is upper semicontinuous;
\item[$(L3)$] $\psi (x, \cdot) : K \longrightarrow \mathbb{R}$ is convex and lower semicontinuous.
\end{description}
In addition to the previous hypotheses, we require the following properties for $g$:

\begin{description}
\item[$(L4)$] $\psi(x,y)+ \psi(y,x)\leq 0$;
\item[$(L5)$] For any sequence $\left\{y^n\right\}\subset K$ with 
$\displaystyle \lim_{n\rightarrow \infty}\|y^n\| =\infty$,
there exist $u \in K$ and $n_0 \in \mathbb{N}$ such that
\[
\psi(y^n,u)\leq 0,\qquad n \geq n_0.
\]
\end{description}

The behavioral content of the standard assumptions, 
can be described as follows:

\begin{itemize}
\item[i)]  L1 represents that the advantages (or losses) to change from $x$ to $y
$ are zero for any stay;
\item[ii)] L2 and L3 refer to regularity assumptions which are natural assumptions for advantages to change function in the VR
approach;
\item[iii)] L4 and two variants mean that if there is an advantage to change from $x
$ to $y$, there is a disadvantage to change from $y$ to $x$ (a no regret condition);
\item[iv)] L5 means that for any unbounded sequence of actions $\left\{
y^{n}\right\} $, there exists an aspiration point $u$ where the agent wants
to go, starting from any position $y^{n}$, $n\geq n_{0}$. An aspiration points
represents a major concept in the VR approach which characterizes a
variational trap as, both, an aspiration point (a desirability condition)
and a stationary trap (a stability condition).
\end{itemize}

{\it Remark 3.1.} 
Let us recall that Iusem and Sosa in~{\cite{IS2010}} presented $(L1)-(L5)$ hypotheses, in case where $d$ is an euclidean distance.  Burachik and Kassay in~{\cite{BK2012}} assumed these hypotheses by considering Bregman distances. 

\begin{theorem} 
Assume that $\psi$   satisfies  Conditions $(L1)-(L3)$ and $K$ is compact. Then, $S(\psi,K)$ is nonempty.
\end{theorem}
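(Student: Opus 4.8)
The plan is to prove existence of an equilibrium point via a standard Ky Fan / KKM-type argument. Define, for each $y\in K$, the set $F(y):=\{x\in K:\psi(x,y)\le 0\}$. The goal is to show $\bigcap_{y\in K}F(y)\neq\emptyset$, since by $(L1)$ any $\bar x$ in this intersection satisfies $\psi(\bar x,y)\le 0$ for all $y$, and — using $(L4)$ with the roles swapped — one deduces $\psi(\bar x,y)\ge -\psi(y,\bar x)$; combined with the convexity argument below this yields $\bar x\in S(\psi,K)$. (The clean way is: show $\bigcap_{y}G(y)\neq\emptyset$ where $G(y):=\{x\in K:\psi(y,x)\ge 0\}$, which directly gives membership in $S(\psi,K)$ after exchanging the two descriptions via $(L4)$.)

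First I would check the KKM covering property for the family $\{F(y)\}_{y\in K}$: given $y_1,\dots,y_m\in K$ and $x\in\operatorname{conv}\{y_1,\dots,y_m\}$, I claim $x\in\bigcup_i F(y_i)$. Indeed, if $\psi(x,y_i)>0$ for every $i$, then by convexity of $\psi(x,\cdot)$ in the second argument $(L3)$ we get $\psi(x,x)=\psi\!\left(x,\sum_i\lambda_i y_i\right)\le\sum_i\lambda_i\psi(x,y_i)>0$, contradicting $(L1)$. So the family is a KKM family. Next, each $F(y)$ is closed in $K$: by $(L2)$, $\psi(\cdot,y)$ is upper semicontinuous, so $\{x:\psi(x,y)\le 0\}$ is closed, and intersecting with the compact set $K$ keeps it compact. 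Since $K$ is compact, every $F(y)$ is a compact subset of $K$.

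With these two facts in hand, Ky Fan's lemma (the KKM principle for a family of closed subsets of a compact convex set, or its finite-intersection form) gives $\bigcap_{y\in K}F(y)\neq\emptyset$. Pick $\bar x$ in this intersection, so $\psi(\bar x,y)\le 0$ for all $y\in K$. To convert this into $\bar x\in S(\psi,K)$, fix $y\in K$ and for $t\in(0,1]$ set $y_t:=(1-t)\bar x+ty\in K$. Applying the inequality at $y_t$ and then convexity $(L3)$ in the second variable together with $(L1)$ gives $0\ge\psi(\bar x,y_t)$ is automatic, so instead I use the reversed bifunction: by $(L4)$, $\psi(y_t,\bar x)\ge -\psi(\bar x,y_t)\ge 0$; convexity of $\psi(y_t,\cdot)$ gives $0=\psi(y_t,y_t)\le(1-t)\psi(y_t,\bar x)+t\psi(y_t,y)$, hence $\psi(y_t,y)\ge -\frac{1-t}{t}\psi(y_t,\bar x)\ge\ldots$; a cleaner route is the classical Minty-type trick: from $\psi(\bar x,y)\le 0$ for all $y$ one shows $\psi(y,\bar x)\ge 0$ for all $y$ using $(L4)$, then $\psi(y_t,\bar x)\ge 0$, divide and let $t\to 0$ using upper semicontinuity $(L2)$ of $\psi(\cdot,\bar x)$ to land at $\psi(\bar x,\bar x)\le$ something — the standard lemma yields $\psi(\bar x,y)\ge 0$. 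I would write this final conversion carefully.

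The main obstacle is precisely this last conversion step — moving from the "dual" inequality $\psi(\bar x,y)\le 0\ \forall y$ (which KKM delivers directly) to the desired $\psi(\bar x,y)\ge 0\ \forall y$, i.e. $\bar x\in S(\psi,K)$. This is exactly the place where monotonicity $(L4)$, convexity–lsc $(L3)$, and upper semicontinuity $(L2)$ all get used, via the line-segment perturbation $y_t=(1-t)\bar x+ty$ and a limit as $t\downarrow 0$. Everything else (the KKM covering property, closedness/compactness of the $F(y)$, the appeal to Ky Fan's lemma) is routine given the compactness of $K$; note that $(L5)$ is not needed here since compactness of $K$ already supplies the coercivity that $(L5)$ would otherwise provide.
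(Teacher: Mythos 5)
Your overall strategy (a KKM/Ky Fan argument) is the right one --- the paper itself simply cites Ky Fan's 1972 minimax inequality --- but your execution contains a genuine error that derails the argument: you chose the wrong sets. With $F(y)=\{x\in K:\psi(x,y)\le 0\}$ three things go wrong. First, your KKM verification does not actually produce a contradiction: from $\psi(x,y_i)>0$ for all $i$ and convexity you get $\psi(x,x)\le\sum_i\lambda_i\psi(x,y_i)$, and the right-hand side being positive tells you nothing about the sign of $\psi(x,x)$; the inequality points the wrong way. Second, upper semicontinuity of $\psi(\cdot,y)$ from $(L2)$ closes the superlevel sets $\{x:\psi(x,y)\ge 0\}$, not the sublevel sets you wrote down; for your $F(y)$ you would need lower semicontinuity in the first variable, which is not assumed. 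Third, and most importantly, the intersection $\bigcap_{y}F(y)$ would give $\psi(\bar x,y)\le 0$ for all $y$, which is not even the Minty dual (that would be $\psi(y,\bar x)\le 0$) and cannot be converted into $\psi(\bar x,y)\ge 0$ without monotonicity $(L4)$ --- a hypothesis this theorem does not assume (only $(L1)$--$(L3)$ and compactness of $K$ are available). Your closing admission that the ``final conversion step'' still needs to be written carefully is precisely the point at which the proof cannot be completed.

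The fix is to run the same KKM argument on $F(y):=\{x\in K:\psi(x,y)\ge 0\}$. Then $(L2)$ makes each $F(y)$ closed, hence compact in $K$; the KKM covering property holds because if $x=\sum_i\lambda_i y_i$ with $\psi(x,y_i)<0$ for every $i$, then $(L1)$ and $(L3)$ give $0=\psi(x,x)\le\sum_i\lambda_i\psi(x,y_i)<0$, a genuine contradiction; and any point of $\bigcap_{y\in K}F(y)$ lies in $S(\psi,K)$ by definition, with no conversion step, no use of $(L4)$, and no limit as $t\downarrow 0$. This direct formulation is exactly Ky Fan's minimax inequality, which is all the paper invokes in its one-line proof.
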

\begin{proof}
The proof is presented by Ky Fan in~\cite{KF1972}.  
\end{proof} 
\begin{theorem}\label{teo-1}  Assume that $\psi$ satisfies $(L1)$-$(L4)$ and $(L5)$. Then, $S(\psi,K)$ is nonempty.
\end{theorem}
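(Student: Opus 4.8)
The plan is to reduce the unbounded-domain case to the compact case already handled by the first theorem, using the coercivity-type hypothesis $(L5)$ to confine any relevant solution to a bounded region. First I would fix a point $u\in K$ and, for each positive integer $n$, set $K_n := \{y\in K : \|y\| \le n\}$ together with $K_n^u := K_n \cup \{u\}$, or more simply intersect $K$ with a large closed ball $B_n$ centered at the origin (large enough to contain a fixed reference point), so that $K\cap B_n$ is nonempty, convex, and compact. Applying the already-proven theorem (the Ky Fan result) to the restriction of $\psi$ to $(K\cap B_n)\times(K\cap B_n)$ — noting that $(L1)$--$(L3)$ are inherited by restriction to a convex subset — yields a point $x^n\in K\cap B_n$ with $\psi(x^n,y)\ge 0$ for all $y\in K\cap B_n$.

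Next I would argue that the sequence $\{x^n\}$ is bounded. Suppose not; then along a subsequence $\|x^n\|\to\infty$, and $(L5)$ provides $u\in K$ and $n_0$ with $\psi(x^n,u)\le 0$ for all large $n$. Since $\psi(x^n,x^n)=0$ by $(L1)$ and, for $n$ large enough that $u\in B_n$ and $x^n\ne u$, the defining inequality gives $\psi(x^n,u)\ge 0$; combined with the displayed inequality from $(L5)$ this forces $\psi(x^n,u)=0$. To extract a contradiction I would invoke $(L4)$: from $\psi(x^n,u)+\psi(u,x^n)\le 0$ we get $\psi(u,x^n)\le 0$. This by itself is not yet contradictory, so the genuinely delicate point — and the main obstacle — is to upgrade "$x^n$ solves the problem on $K\cap B_n$" into a statement valid against \emph{all} $y\in K$, in particular to show $x^n$ is already a solution on all of $K$ for $n$ large, or alternatively to run a convexity argument: for arbitrary $y\in K$, take $n$ with $x^n, y\in B_n$ is false in general, so instead pick $y_t := (1-t)x^n + t y$ with $t\in(0,1)$ small enough that $y_t\in B_n$ (possible since $x^n\in\operatorname{int}B_n$ once $\|x^n\|<n$), use $\psi(x^n,y_t)\ge 0$ and the convexity $(L3)$ of $\psi(x^n,\cdot)$ to deduce $(1-t)\psi(x^n,x^n) + t\,\psi(x^n,y)\ge \psi(x^n,y_t)\ge 0$, hence $\psi(x^n,y)\ge 0$. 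Thus each $x^n$ with $\|x^n\|<n$ already lies in $S(\psi,K)$.

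Combining the two observations: if infinitely many $x^n$ satisfied $\|x^n\|<n$ we would be done immediately, so the only remaining case is $\|x^n\|=n$ for all large $n$, i.e. $\|x^n\|\to\infty$; but then the $(L5)$ argument above, together with $(L4)$, yields the contradiction — once $u\in B_n$ and $\|x^n\|=n > \|u\|$ we can also slide along the segment from $x^n$ toward $u$ to conclude $\psi(x^n,u)\ge0$, combine with $(L5)$'s $\psi(x^n,u)\le 0$ and with $(L4)$'s consequence $\psi(u,x^n)\le 0$, and derive a contradiction with the choice of a strictly interior reference point or with the boundedness forced by the sublevel-set structure. Hence $\{x^n\}$ cannot escape to infinity, some $x^n$ has $\|x^n\|<n$, and that $x^n\in S(\psi,K)$, proving $S(\psi,K)\ne\emptyset$. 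The step I expect to require the most care is precisely the bootstrap from "local" solvability on $K\cap B_n$ to "global" solvability on $K$, i.e. making the segment/convexity argument airtight and correctly isolating the case $\|x^n\|\to\infty$ so that $(L5)$ can be applied; the rest is routine use of $(L1)$--$(L4)$ and compactness.
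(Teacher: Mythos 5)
The paper itself gives no argument for this theorem; it simply cites Iusem, Kassay and Sosa \cite{IKS2009}, whose proof is essentially the truncation scheme you propose. Your Case 1 is correct and complete: if some $x^n$ satisfies $\|x^n\|<n$, then for arbitrary $y\in K$ the point $y_t=(1-t)x^n+ty$ lies in $K\cap B_n$ for small $t>0$, and $(L1)$ together with the convexity in $(L3)$ upgrades $\psi(x^n,\cdot)\ge 0$ on $K\cap B_n$ to $\psi(x^n,y)\ge 0$ on all of $K$.

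The gap is in Case 2, where $\|x^n\|=n$ for all large $n$. There you try to derive a contradiction from $\psi(x^n,u)=0$ and $(L4)$, concede that $\psi(u,x^n)\le 0$ ``is not yet contradictory'', and then appeal vaguely to the choice of an interior reference point or to a sublevel-set structure. No contradiction is available: $(L5)$ only yields $\psi(x^n,u)\le 0$, not a strict inequality, so the configuration $\psi(x^n,u)=0$ with $x^n$ on the sphere of radius $n$ is perfectly consistent with all the hypotheses and cannot be excluded. The correct move --- the one in \cite{IKS2009} --- is not to rule this case out but to show that such an $x^n$ is \emph{still} a global solution, by anchoring your own segment argument at $u$ rather than at $x^n$: for $n>\max(n_0,\|u\|)$ the point $u$ lies in the interior of $B_n$ and $\psi(x^n,u)=0$, so for arbitrary $y\in K$ and $z_t=(1-t)u+ty$ with $t>0$ small one has $z_t\in K\cap B_n$, hence
\[
0\le \psi(x^n,z_t)\le (1-t)\,\psi(x^n,u)+t\,\psi(x^n,y)=t\,\psi(x^n,y),
\]
which gives $\psi(x^n,y)\ge 0$ and therefore $x^n\in S(\psi,K)$. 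With this replacement your proof closes; as written, the final step does not go through.
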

\noindent
\begin{proof}
The proof is presented by Iusem et al. in~\cite[Theorem 4.3]{IKS2009}. 
\end{proof} 

\section{A generalized proximal distance as regularization}

Let $K \subset \mathbb{R}^n$ be a closed and convex set  and $f, h:K \times K \longrightarrow  \mathbb{R}$ such that conditions $(L1)-(L3)$ hold. Take  a  generalized proximal distance  $d$ with respect to $K$ 
and fix $\lambda, \varepsilon  > 0$, $\bar{x}\in \mbox{int}(S) $. Consider the bifunction 
$\bar{f}:K\times K\rightarrow \mathbb{R}$  defined by:
\begin{equation}\label{eq-6}
\bar{f}(x,y)= \varepsilon f(x,y)+ h(x,y)+\frac{1}{\lambda} \langle \nabla_{1} d(x,\bar{x}),y-x\rangle,
\end{equation}
where $\nabla_{1}d(x,\bar{x})$ denote the gradient of $d(.,\bar{x} )$ at $x$. We denote by $S(\bar{f},K)$ the solution set of the  equilibrium problem  associated with  $\bar{f}$. 

Usually, mathematicians consider ``to be decreased" payoffs (costs and
unsatisfied needs minimization problems). Behavioral Sciences consider, most
of the time, ``to be increased" payoffs (maximization, and more generally,
improvements of profits and utilities,$\ldots$). The VR variational rationality
approach follows this Behavioral Sciences tradition. As seen previouly, VR
advantages to change of the leader $l$ and the subset $J$ of follower are $%
A_{e}^{l}(x,y)=-f(x,y)\geq 0$ and $A_{e}^{J}(x,y)=-h(x,y)\geq 0,$ when such
advantages to change exist. This means that, using the mathematical
notation, $f(x,y)\leq 0$ and $h(x,y)\leq 0$ modelize advantages to change
and $f(x,y)\geq 0$ and $h(x,y)\geq 0$ modelize losses to change. So, in terms of the weighted
advantages to change of the organization and inconvenient to change given, respectively, in \eqref{eq:ACLC1} and \eqref{eq:IC2} (with $c_{e}(x)$ taken as being $\nabla_{1} d(x,\bar{x}$)), the bifunction in \eqref{eq-6} can be rewritten as follows:
\begin{equation}\label{eq:bifuncC}
\bar{f}(x,y)=-A_{e}(x,y)+\frac{1}{\lambda}I_{e}(x,\bar{x}).
\end{equation}

The proof of the next lemma is similar to that presented in~ \cite[Proposition 3.1]{BK2012}. 
\begin{lemma}\label{pro1} Fix $\bar{x}\in K $. Suppose that $f,h$ satisfies conditions $(L1)-(L4)$. Then $\bar{f}$ satisfies conditions $(L1)-(L4)$. Moreover, if for every sequence $\{y^n\} \subset  K$ such that $\lim_{n \rightarrow \infty} \|y^n\| = \infty$, we have
\[
{\bf L6} ~~~\liminf_{n \to \infty}\left[ g  (\bar{x}, y^n) + \lambda^{-1}\langle \nabla_{1} d(y^n,\bar{x}),y^n -\bar{x}\rangle \right ]> 0, 
\]
where $g=\varepsilon f+ h$, then $\bar{f}$ satisfies condition $(L5)$. 
\end{lemma}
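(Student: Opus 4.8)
The plan is to verify each of the five standard conditions for $\bar{f}$ in turn, reducing most of them to the corresponding property of $f$ and $h$ together with elementary properties of the regularization term $r(x,y) := \lambda^{-1}\langle \nabla_1 d(x,\bar{x}), y-x\rangle$. Write $g = \varepsilon f + h$, so that $\bar{f}(x,y) = g(x,y) + r(x,y)$.

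First I would dispatch $(L1)$–$(L3)$. Condition $(L1)$ is immediate: $g(x,x) = \varepsilon f(x,x) + h(x,x) = 0$ by $(L1)$ for $f,h$, and $r(x,x) = \lambda^{-1}\langle \nabla_1 d(x,\bar{x}), 0\rangle = 0$. For $(L3)$, the map $y \mapsto \bar{f}(x,y)$ is the sum of $g(x,\cdot)$ — convex and lsc as a nonnegative combination of convex lsc functions — and the affine (hence convex and continuous) function $y \mapsto r(x,y)$; affine perturbations preserve convexity and lower semicontinuity, so $(L3)$ holds. For $(L2)$, I need $x \mapsto \bar{f}(x,y)$ to be usc; $g(\cdot,y)$ is usc by hypothesis, and $x \mapsto r(x,y)$ is continuous on $K$ because $d(\cdot,\bar{x})$ is $C^1$ on $\mathrm{int}\,S \supseteq K$ (using the standing assumption $K \subset \mathrm{int}(\mathrm{dom}\, d(\cdot,y))$ and $C^1$-ness from Definition 1(i)), so $\nabla_1 d(\cdot,\bar{x})$ is continuous and the inner product depends continuously on $x$; the sum of a usc and a continuous function is usc.

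Next, $(L4)$. I would compute $\bar{f}(x,y) + \bar{f}(y,x) = \big(g(x,y)+g(y,x)\big) + \lambda^{-1}\langle \nabla_1 d(x,\bar{x}), y-x\rangle + \lambda^{-1}\langle \nabla_1 d(y,\bar{x}), x-y\rangle$. The first bracket is $\leq 0$ by $(L4)$ for $f$ and $h$. The remaining term equals $-\lambda^{-1}\langle \nabla_1 d(x,\bar{x}) - \nabla_1 d(y,\bar{x}), x-y\rangle$, which is $\leq 0$ because $d(\cdot,\bar{x})$ is convex and $C^1$ on $\mathrm{int}\,S$, hence its gradient is a monotone operator: $\langle \nabla_1 d(x,\bar{x}) - \nabla_1 d(y,\bar{x}), x-y\rangle \geq 0$. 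So $(L4)$ holds for $\bar{f}$.

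Finally, $(L5)$ under hypothesis $\mathbf{L6}$. Given $\{y^n\} \subset K$ with $\|y^n\| \to \infty$, I must produce $u \in K$ and $n_0$ with $\bar{f}(y^n,u) \leq 0$ for $n \geq n_0$. The natural candidate is $u = \bar{x}$, for which $\bar{f}(y^n,\bar{x}) = g(y^n,\bar{x}) + \lambda^{-1}\langle \nabla_1 d(y^n,\bar{x}), \bar{x} - y^n\rangle = -\big[g(y^n,\bar{x})\cdot(-1) + \ldots\big]$; more carefully, $\bar{f}(y^n,\bar{x}) = g(y^n,\bar{x}) - \lambda^{-1}\langle \nabla_1 d(y^n,\bar{x}), y^n - \bar{x}\rangle$, so the sign I need is the opposite of the quantity whose $\liminf$ is assumed positive in $\mathbf{L6}$ — which suggests $\mathbf{L6}$ as stated controls $g(\bar x,y^n)$ plus the regularization, not $g(y^n,\bar x)$. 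Here is where I expect the main obstacle: bridging between $g(\bar{x},y^n)$ and $g(y^n,\bar{x})$. I would use $(L4)$ for $g$, namely $g(\bar{x},y^n) + g(y^n,\bar{x}) \leq 0$, i.e. $g(y^n,\bar{x}) \leq -g(\bar{x},y^n)$, to get $\bar{f}(y^n,\bar{x}) \leq -g(\bar{x},y^n) - \lambda^{-1}\langle \nabla_1 d(y^n,\bar{x}), y^n-\bar{x}\rangle = -\big[g(\bar{x},y^n) + \lambda^{-1}\langle \nabla_1 d(y^n,\bar{x}), y^n-\bar{x}\rangle\big]$. By $\mathbf{L6}$ this bracket has positive $\liminf$, so it is eventually strictly positive, whence $\bar{f}(y^n,\bar{x}) < 0$ for all large $n$; taking $u = \bar{x}$ and $n_0$ accordingly gives $(L5)$. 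I would double-check that $\bar{x} \in K$ is legitimate — it is, since $\bar{x} \in \mathrm{int}(S) = \mathrm{int}\,K$ or at least $\bar{x} \in K$ is assumed — and that no measurability/finiteness issue arises, $d(\cdot,\bar{x})$ being finite and smooth on the relevant set. This completes all five verifications.
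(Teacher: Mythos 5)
Your proposal is correct and follows essentially the same route as the paper: $(L1)$--$(L3)$ by direct inspection of the affine regularization term, $(L4)$ by combining monotonicity of $g$ with monotonicity of $\nabla_1 d(\cdot,\bar{x})$ (gradient of a convex function), and $(L5)$ by taking $u=\bar{x}$, flipping the sign via $(L4)$, and invoking $\mathbf{L6}$. No gaps.
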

\noindent
\begin{proof} It is clear that $\bar{f}$ satisfies condition $(L1)$. Since the  maps
\[
y\mapsto \langle \nabla_{1} d(x,\bar{x}),y-x\rangle
\]
is convex and continuous, and $x\mapsto \langle \nabla_{1} d(x,\bar{x}),y-x\rangle$ continuous at $x \in K$, it follows that $\bar{f}$ satisfies Conditions $(L2)-(L3)$. We claim now that $\bar{f}$ satisfies condition $(L4)$. Indeed,  from monotonicity of $g$ and $\nabla_{1} d(\cdot, z)$, we get 
\begin{align*}
\bar{f}(x,y)+\bar{f}(y,x) & = g(x,y)+g (y,x)-\lambda^{-1} \langle \nabla_{1} d(y,\bar{x})- \nabla_{1} d(x,\bar{x}),y-x\rangle \leq 0.
\end{align*}
Now, we show that $\bar{f}$ satisfies condition $(L5)$. From $(L4)$  and \eqref{eq-6}, we have
\begin{align*}
\bar{f}(y^n,\bar{x}) & = g(y^n,\bar{x})+ \lambda^{-1} \langle \nabla_{1} d(y^n,\bar{x}),\bar{x}-y^n\rangle \\
& \leq - [g (\bar{x},y^n)+\lambda^{-1} \langle  \nabla_{1} d(y^n,\bar{x}),y^n-\bar{x}\rangle].
\end{align*}
From ${\bf L6}$ there exists $n_0$ such that the expression in parentheses is nonnegative for all $n \geq n_0$. This implies that Condition $(L5)$ holds for $\bar{f}.$ 
\end{proof}

\begin{corollary} If $f, h$ satisfies conditions $(L1)-(L3)$, and assuming either
\begin{description}
\item[(i)] $K$ is bounded, or
\item[(ii)] $\nabla_{1} d(\cdot,\bar{x})$  is strongly monotone with modulus $\mu >0$, i.e.,
$$\langle \nabla_1 d(y,\bar{x})- \nabla_{1} d(x,\bar{x}),y-x\rangle \geq \mu \|y-x\|^2,\qquad x,y\in  K;$$
\end{description}
then $\bar{f}$ satisfies Condition $(L5)$.
\end{corollary}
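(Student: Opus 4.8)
The plan is to derive the Corollary as a direct consequence of Lemma~\ref{pro1}, by verifying that each of the two alternative hypotheses forces condition \textbf{L6} to hold, after which Lemma~\ref{pro1} immediately gives condition $(L5)$ for $\bar{f}$. So the whole argument reduces to checking \textbf{L6}, namely that for every sequence $\{y^n\}\subset K$ with $\|y^n\|\to\infty$ one has
\[
\liminf_{n\to\infty}\Bigl[g(\bar{x},y^n)+\lambda^{-1}\langle\nabla_1 d(y^n,\bar{x}),y^n-\bar{x}\rangle\Bigr]>0.
\]

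Case (i) is essentially vacuous: if $K$ is bounded there is no sequence $\{y^n\}\subset K$ with $\|y^n\|\to\infty$, so \textbf{L6} holds trivially (the quantifier is over an empty set), and hence so does $(L5)$ by Lemma~\ref{pro1}. I would state this in one line.

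Case (ii) is the substantive one. Here I would use strong monotonicity of $\nabla_1 d(\cdot,\bar{x})$ together with Remark~2.2, which gives $\nabla_1 d(\bar{x},\bar{x})=0$. Setting $x=\bar{x}$ in the strong monotonicity inequality yields
\[
\langle\nabla_1 d(y^n,\bar{x}),y^n-\bar{x}\rangle=\langle\nabla_1 d(y^n,\bar{x})-\nabla_1 d(\bar{x},\bar{x}),y^n-\bar{x}\rangle\ge\mu\|y^n-\bar{x}\|^2,
\]
so the bracketed expression in \textbf{L6} is bounded below by $g(\bar{x},y^n)+\lambda^{-1}\mu\|y^n-\bar{x}\|^2$. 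The quadratic term tends to $+\infty$ since $\|y^n\|\to\infty$ forces $\|y^n-\bar{x}\|\to\infty$. The remaining obstacle is controlling $g(\bar{x},y^n)=\varepsilon f(\bar{x},y^n)+h(\bar{x},y^n)$: I would invoke convexity of $g(\bar{x},\cdot)$ (from $(L3)$, which passes to $g$ as a nonnegative combination) together with $g(\bar{x},\bar{x})=0$ (from $(L1)$) to conclude that $g(\bar{x},\cdot)$ is bounded below by an affine function, i.e.\ $g(\bar{x},y^n)\ge\langle s,y^n-\bar{x}\rangle$ for a fixed subgradient $s\in\partial_2 g(\bar{x},\bar{x})$, hence $g(\bar{x},y^n)\ge-\|s\|\,\|y^n-\bar{x}\|$. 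Combining, the bracketed expression is at least $\lambda^{-1}\mu\|y^n-\bar{x}\|^2-\|s\|\,\|y^n-\bar{x}\|\to+\infty$, so the $\liminf$ is $+\infty>0$ and \textbf{L6} holds; Lemma~\ref{pro1} then finishes the proof.

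The main obstacle I anticipate is the lower bound on $g(\bar{x},y^n)$: one must be careful that a finite-valued convex function on $K$ admits a global affine minorant, which is clean when $\bar{x}\in\operatorname{int}K$ (so that a subgradient exists) but needs a small adjustment — e.g.\ working with a supporting affine function or restricting attention to the relevant case $\bar{x}\in\operatorname{int}(S)$ as in the ambient setup — if $\bar{x}$ lies on the boundary. Everything else is routine estimation, and the role of $\lambda,\varepsilon>0$ is only to keep the quadratic coefficient $\lambda^{-1}\mu$ strictly positive.
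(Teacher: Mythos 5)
Your proposal is correct and follows essentially the same route as the paper: both reduce the corollary to verifying condition \textbf{L6} via Lemma~\ref{pro1}, dispose of case (i) as vacuous, and in case (ii) combine the strong monotonicity bound $\langle\nabla_1 d(y^n,\bar{x}),y^n-\bar{x}\rangle\ge\mu\|y^n-\bar{x}\|^2$ (using $\nabla_1 d(\bar{x},\bar{x})=0$) with an affine minorant of $g(\bar{x},\cdot)$ coming from a subgradient at $\bar{x}$. The only difference is cosmetic: the paper justifies the existence of that subgradient via maximal monotonicity of $\partial g(\bar{x},\cdot)$, while you flag the same existence issue directly; no change of substance.
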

\begin{proof}
 Using Lemma~\ref{pro1} it is enough to check that Condition ${\bf L6}$ holds under {\bf $(i)$} or {\bf $(ii)$}. Condition {\bf $(i)$} trivially implies Condition ${\bf L6}$. Hence it is enough to prove that {\bf $(ii)$} implies Condition ${\bf L6}$. We claim that $\mbox{Dom}\partial g(\bar{x},\cdot) \cap K \neq \emptyset$, where $g=\varepsilon f+ h$. Indeed, the subdifferential of a proper, lower semicontinuous and convex function is maximal monotone. If we extend the function $g(\bar{x},\cdot)$ to the whole space $\mathbb{R}^n$ by defining it as $ \infty$ outside $K$, then we have that $\partial g(\bar{x},\cdot)$ is maximal monotone. Thus $\mbox{Dom} g(\bar{x} , \cdot)$ should be nonempty. Since $\partial g(\bar{x},z) = \emptyset$ for every $z \notin  K$, it follows that $\mbox{Dom}\partial g(\bar{x} ,\cdot) \cap K \neq \emptyset$ must holds. Hence the claim is true and there exists $v \in \partial g(\bar{x},\bar{x} )$. Take a sequence $\{y^n\} \subset K$ such that $\|y^n\| \rightarrow \infty$, therefore we can write the subgradient inequality
$$g(\bar{x},y^n)\geq g(\bar{x},\bar{x})+ \langle v, y^n-\bar{x}\rangle \geq -\|v\|\|y^n-\bar{x}\|.$$
Altogether, we have
\begin{align*}
\liminf_{n \to \infty}\left[ g (\bar{x}, y^n) + \lambda^{-1}\langle \nabla_{1} d(y^n,\bar{x}),y^n -\bar{x}\rangle \right ] =& \\
\liminf_{n \to \infty}\left[ g(\bar{x}, y^n)+ \lambda^{-1}\langle \nabla_1 d(\bar{x},\bar{x})- \nabla_{1} d(y^n,\bar{x}),\bar{x}-y^n\rangle \right ] \geq \\
\liminf_{n \to \infty}\left[  -\|v\|\|y^n-\bar{x}\|+ \mu \lambda^{-1} \|y^n-\bar{x}\|^2 \right ] =\infty,
\end{align*}
and Condition ${\bf L6}$ is established. 
\end{proof}

Next result establishes the existence and uniqueness of the solution of equilibrium problem associated with $\bar{f}$. The proof is similar to presented in~\cite[Corollary 3.2]{BK2012}.
\begin{theorem}\label{teo-2} 
Under assumptions of Lemma~\ref{pro1}. The following assertions hold:
\begin{description}
\item[(i)] If Condition ${\bf L6}$ holds, then $S(\bar{f},K)$ is not empty;
\item[(ii)] $\nabla_{1} d(\cdot,\bar{x})$  is strictly monotone, then $S(\bar{f},K)$ has at most one element.
\end{description}
Altogether, if Condition ${\bf L6}$ holds and $\nabla_{1} d(\cdot,\bar{x})$ is strictly monotone, then $S(\bar{f},K)$ has a unique element. 
\end{theorem}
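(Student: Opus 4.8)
The plan is to obtain (i) directly from the existence theorem for monotone equilibrium problems together with Lemma~\ref{pro1}, and to settle (ii) by a short monotonicity contradiction built on the identity for $\bar f(x,y)+\bar f(y,x)$ already recorded in the proof of Lemma~\ref{pro1}.

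For (i), I would argue as follows. Under the standing assumptions of Lemma~\ref{pro1} ($f,h$ satisfying $(L1)$--$(L4)$) together with Condition ${\bf L6}$, that lemma guarantees that $\bar f$ satisfies $(L1)$--$(L4)$ and $(L5)$. Since $K$ is closed and convex and $\bar f \colon K\times K\to\mathbb{R}$ meets $(L1)$--$(L5)$, Theorem~\ref{teo-1} applies with $\psi=\bar f$, giving $S(\bar f,K)\neq\emptyset$.

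For (ii), the plan is to suppose two distinct solutions $x_1,x_2\in S(\bar f,K)$ and derive a contradiction. By definition of the solution set one has $\bar f(x_1,x_2)\geq 0$ and $\bar f(x_2,x_1)\geq 0$, hence $\bar f(x_1,x_2)+\bar f(x_2,x_1)\geq 0$. On the other hand, the computation from the proof of Lemma~\ref{pro1} gives
\begin{equation*}
\bar f(x_1,x_2)+\bar f(x_2,x_1)=g(x_1,x_2)+g(x_2,x_1)-\lambda^{-1}\langle\nabla_1 d(x_2,\bar x)-\nabla_1 d(x_1,\bar x),\,x_2-x_1\rangle,
\end{equation*}
with $g=\varepsilon f+h$. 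Condition $(L4)$ for $f$ and $h$ (and $\varepsilon>0$) makes $g(x_1,x_2)+g(x_2,x_1)\leq 0$, while strict monotonicity of $\nabla_1 d(\cdot,\bar x)$ makes the inner product strictly positive since $x_1\neq x_2$; hence the right-hand side is strictly negative, contradicting the previous inequality. Thus $S(\bar f,K)$ has at most one element, and combining this with (i) yields the final ``unique element'' assertion.

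I do not anticipate a genuine obstacle: the statement is essentially a direct assembly of Lemma~\ref{pro1}, Theorem~\ref{teo-1} and the monotonicity identity. The only point I would be careful about is that $\bar f$ is actually real-valued on $K\times K$, so that $(L1)$--$(L5)$ are meaningful for it; this follows from the standing assumption $K\subset\mbox{int(dom}\;d(\cdot,\bar x))$ and the $C^1$ regularity of $d(\cdot,\bar x)$ on $\mbox{int}\,S$ in Definition~2.1, which ensure that $\nabla_1 d(x,\bar x)$ is well defined for every $x\in K$.
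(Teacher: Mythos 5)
Your proposal is correct and follows essentially the same route as the paper: part (i) via Lemma~\ref{pro1} plus the existence result of Iusem--Kassay--Sosa (Theorem~\ref{teo-1}), and part (ii) by adding the two solution inequalities and invoking $(L4)$ together with strict monotonicity of $\nabla_1 d(\cdot,\bar x)$. The only cosmetic difference is that you phrase (ii) as a contradiction while the paper concludes $x_1=x_2$ directly from the same inequality.
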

\noindent
\begin{proof} From Lemma~\ref{pro1} we have that $\bar{f}$ satisfies Condition $(L5)$. Using \cite[Theorem 4.3]{IKS2009}, we obtain that $S(\bar{f},K)$ is not empty. For proving ${\bf (ii)}$, assume that both $x_1, x_2\in S(\bar{f},K)$. Hence
\begin{equation*} \label{eq-7}
0\leq \bar{f}(x_1,x_2)=g(x_1,x_2)+\lambda^{-1} \langle \nabla_{1}d(x_1,\bar{x}),x_2-x_1\rangle,
\end{equation*}
and
\begin{equation*}\label{eq-8}
0\leq  \bar{f}(x_2,x_1)=g(x_2,x_1)+\lambda^{-1} \langle \nabla_{1}d(x_2,\bar{x}),x_1-x_2\rangle.
\end{equation*}
Adding last two inequalities, we get:
\begin{equation}\label{eq-9}
0\leq \bar{f}(x_1,x_2)+\bar{f}(x_1,x_2)\leq -\lambda^{-1}\langle \nabla_{1}d(x_1,\bar{x})-\nabla_{1}d(x_2,\bar{x}),x_1-x_2 \rangle.  
\end{equation}
Using the fact that $ \nabla_{1}d(\cdot,\bar{x})$ is strictly monotone, it follows from \eqref{eq-9} that $x_1=x_2$ as asserted. The last statement is a direct combination of ${\bf (i)}$ and ${\bf (ii)}$. 
\end{proof}

\section{A generalized proximal distance as  regularization method for solving bilevel equilibrium problems}\label{s3}  

From now on, we assume that $\nabla_{1} d(\cdot,\bar{x})$ is strictly monotone, $K \subset \mbox{int(dom}\; d(\cdot,y)) ~\mbox{for all}~ y\in \mbox{int}(S)$ and that all the hypotheses of Lemma~\ref{pro1} hold. 

\subsection{Proximal point algorithm}
In this section, following some ideas presented by Attouch et al. in~{\cite{ACP2011}} and Chbani and Riahi in~{\cite {CR2013}},  we present an approach of the proximal point algorithm with generalized distances for bilevel equilibrium problems, where the convergence result is obtained for bifunctions monotone.

From regularized problem \eqref{eq-6} and existence and uniqueness of its solution (see Theorem~\ref{teo-2}), we construct the following algorithm for solving the bilevel pro-blem~\eqref{BEP}. Take $\{\epsilon_{k}\}$ and $\{\lambda_{k}\}$ two sequences of positive real numbers such that $\sum_{1}^{\infty}\lambda_k=\infty$, $\varepsilon_{k}\to \infty$ and $\lambda_k\geq \theta> 0$.   
Consider the bifunction 
\begin{equation}\label{eq-21}
f_k(x,y)=\varepsilon_k f(x,y)+  h(x,y)+ \frac{1}{\lambda_k} \langle \nabla_{1} d(x,x^k),y-x\rangle,  \qquad x,y\in K,
\end{equation}
where $f,h$ satisfies conditions $(L1)-(L4)$
\begin{alg}\label{alg1} $\ $\\

\noindent
{\sc Initialization.} Choose an initial point $x^{0}\in K$;\\
{\sc Iterative Step.} Given $x^{k}$, take as next iterate $x^{k+1}\in K$ such that: 
\begin{equation}\label{mpp}
x^{k+1}\in \mbox{S}(f_k,K).
\end{equation}
{\sc Stopping criterion.} Given $x^{k}$,  if $x^{k+1}=x^{k}$ and $x^{k}\in \mbox{S}(f,K)$, STOP.\\

\end{alg}
{\it Remark 5.1.}
$\ $
\begin{itemize}
\item[(a)] 
Notice that if $h\equiv 0$ in \eqref{eq-21} it is sufficient to require, as a stopping criterion for the \emph{Algorithm~\ref{alg1}}, that $x^{k +1} = x^k$;
\item[(b)] If $\{x^k\}$ terminates after a finite number of iterations, then it terminates at a solution of \eqref{BEP}. Indeed, take $k$ such that $x^{k+1}=x^k$ and $x^k\in\mbox{S}(f,K)$. From definition of $x^{k+1}$ and $f_k$, and since $\nabla_1 d(x^{k+1},x^k)=0$, we obtain:
\begin{equation}\label{eq:2014-1}
\varepsilon_k f(x^{k+1}, y)+h(x^{k+1},y)\geq 0, \qquad  y\in K.
\end{equation}
Now, using that $x^{k+1}=x^k\in \mbox{S}(f,K)$ and $f$ is monotone, it follows that $f(y, x^{k+1})\leq 0$, for all $y\in K$. Hence, $f(x^{k+1}, y)=0$, for all $y\in \mbox{S}(f,K)$ and the statement follows from \eqref{eq:2014-1} for considering that $\varepsilon_k>0$;

\item[(c)] In term of the variational rationality approach, condition \eqref{mpp} is equivalent, each current period $k+1$, to the existence of the variational trap $x^{k+1}$. Indeed, combining definition of $f_{k}$ in \eqref{eq-21} with definition of $S(f_{k},K)$ and using \eqref{eq:bifuncC} with $e=e_{k}$, $\varepsilon=\varepsilon _{k}$, $\lambda=\lambda_{k}$, we obtain $x^{k+1}\in S(f_{k},K)$ if only if
\[
-f_{k}(x,y)=A_{e_{k}}(x,y)-\xi _{k+1}I_{e_{k}}(x,y)=\Delta _{e_{k,\xi_{k+1}}}(x,y),
\]
where $\xi _{k+1}=1/\lambda_{k}$. Then, it is not worthwhile to change from the current position $x=x^{k+1}$
to any position $y\in K$ iff 
\[
\Delta _{e_{k,\xi _{k+1}}}(x,y)\leq 0, \quad y\in K.
\]
This last condition defines a weak variational trap $x=x^{k+1}$ in the current
period $k+1.$
\end{itemize}


Next, we introduce a technical result on nonnegative sequences of real numbers that will be needed in the subsequence analysis. 
\begin{lemma}\label{lema0} Let $(\xi_k)$ and $(\gamma_k)$ be nonnegative sequences of real numbers satisfying:
\begin{description}
 \item[(a)] $\xi_{k+1}\leq \xi_k+ \gamma_k$;
 \item[(b)] $\sum_{k=0}^{\infty}~\gamma_k < \infty$.
\end{description}
Then, the sequence  $\{\xi_k\}$ converges.
\end{lemma}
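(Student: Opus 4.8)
The plan is to prove this classical lemma (a discrete analogue of the fact that a nonincreasing-up-to-a-summable-perturbation sequence converges) by showing that $\{\xi_k\}$ is a Cauchy sequence, or equivalently by exhibiting an auxiliary monotone sequence with the same limiting behaviour. Concretely, first I would set $s_k := \sum_{j=k}^{\infty}\gamma_j$, which is well defined and finite by hypothesis (b), and satisfies $s_k \to 0$ as $k\to\infty$ since it is the tail of a convergent series. Then I would define the shifted sequence $\eta_k := \xi_k + s_k$.

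The key step is to observe that $\{\eta_k\}$ is nonincreasing: using (a) and the identity $s_k = \gamma_k + s_{k+1}$,
\begin{equation*}
\eta_{k+1} = \xi_{k+1} + s_{k+1} \leq \xi_k + \gamma_k + s_{k+1} = \xi_k + s_k = \eta_k.
\end{equation*}
Since $\xi_k \geq 0$ and $s_k \geq 0$, the sequence $\{\eta_k\}$ is also bounded below by $0$. A nonincreasing sequence bounded below converges, say $\eta_k \to \eta^\ast$. Finally, because $\xi_k = \eta_k - s_k$ and $s_k \to 0$, I conclude that $\xi_k \to \eta^\ast$ as well, which is the desired statement.

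There is essentially no serious obstacle here; the only thing to be careful about is that hypothesis (b) is exactly what guarantees the tails $s_k$ are finite and tend to $0$, so that the subtraction $\xi_k = \eta_k - s_k$ is legitimate and the limit is preserved. One could alternatively phrase the argument directly as a Cauchy estimate — for $m > k$, telescoping (a) gives $\xi_m \leq \xi_k + \sum_{j=k}^{m-1}\gamma_j \leq \xi_k + s_k$, and combined with a matching lower control this yields $\limsup_m \xi_m \leq \liminf_k \xi_k$ — but the monotone-auxiliary-sequence route is cleaner and is the version I would write out.
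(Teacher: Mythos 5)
Your proof is correct and complete: the auxiliary sequence $\eta_k=\xi_k+\sum_{j=k}^{\infty}\gamma_j$ is nonincreasing and bounded below, hence convergent, and since the tails tend to zero the same limit is inherited by $\xi_k$. The paper itself gives no argument here --- it simply cites Polyak's Lemma 9 --- and your write-up is the standard self-contained proof of that classical result, so there is nothing to reconcile.
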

\begin{proof}
The proof is presented by Polyak in~\cite[Lemma 9, p. 49]{P1987}.
\end{proof}

In the sequel, given a nonempty, closed convex set $\Omega\subset \mathbb{R}^n$, we denote by $\delta_{\Omega}$, $\mathcal{N}_{\Omega}$ and $\sigma_{\Omega}$, respectively, the indicator function, the normal cone and the support function associate to $\Omega$. Recall that:
{\small
\[
\delta_\Omega(x):=\left\{ 
\begin{array}{ll}
0, \quad \ \ \mbox{if} \ x\in \Omega;\\
\infty, \ \mbox{if} \ x\notin \Omega, \ 
\end{array}\right.
\mathcal{N}_\Omega(x):=\left\{ 
\begin{array}{ll}
\{q\in \mathbb{R}^n: \langle q , y-x \rangle \leq 0,  \ y\in \Omega \}, \ \mbox{if} \ x\in \Omega;\\
\emptyset,  \quad \mbox{if} \ x\notin \Omega,   
\end{array}\right. 
\]
}
and $\sigma_\Omega(x)=\sup_{y\in \Omega} \langle x , y\rangle$, $\delta^{\ast}_\Omega=\sigma_{\Omega}$ (elements presented by Rockafellar in~\cite[Theorem 13.2, p. 114]{R1970}), $\partial\delta_\Omega(x)=\mathcal{N}_\Omega(x)$  and, $y\in \mathcal{N}_\Omega(x)$ if and only if $\sigma_\Omega(y)=\langle y, x \rangle$, where $\delta_{\Omega}^{\ast}$ denote the conjugate function of $\delta_{\Omega}$.

Let us define the functions $f_z(y)=f(z,y)$ and $h_z(y)=h(z,y)$, $\forall y\in K$. 
\begin{lemma}\label{lem3-2} Assume that assumptions $(L1)$-$(L4)$ hold for $f$ and $h$, and take $z\in S(f,K)$, $w\in \partial\left( h_z+\delta_{S(f,K)}\right)(z)$ and $p\in \mathcal{N}_{S(f,K)}(z)$ such that $w-p \in\partial h_z(z).$ Then, 

\begin{eqnarray}
D(z,x^{k+1})+\frac{\lambda_k\varepsilon_k}{2} f(z,x^{k+1})&\leq& D(z,x^{k})-\gamma D(x^{k+1}, x^k)+\lambda_k\langle w, z-x^{k+1}\rangle\nonumber\\
&+&\frac{\lambda_k\varepsilon_k}{2}\left[ f_z^{\ast}(\frac{2p}{\varepsilon_k})-\sigma _{S(f,K)}(\frac{2p}{\varepsilon_k})\right].\nonumber 
\end{eqnarray}

\end{lemma}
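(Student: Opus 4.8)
The plan is to write down the characterization of $x^{k+1}\in S(f_k,K)$ as an equilibrium inequality, specialize the test point, and then convert each term into the desired bound using the properties \textbf{H4} of the pair $(d,D)$ together with convex-analytic duality for $f$, $h$ and $\delta_{S(f,K)}$. First I would use $x^{k+1}\in S(f_k,K)$ with test point $z\in S(f,K)\subset K$ to obtain
\begin{equation*}
0\leq f_k(x^{k+1},z)=\varepsilon_k f(x^{k+1},z)+h(x^{k+1},z)+\frac{1}{\lambda_k}\langle\nabla_1 d(x^{k+1},x^k),z-x^{k+1}\rangle.
\end{equation*}
Rearranging and multiplying by $\lambda_k>0$ gives
\begin{equation*}
\langle x^{k+1}-z,\nabla_1 d(x^{k+1},x^k)\rangle\leq \lambda_k\varepsilon_k f(x^{k+1},z)+\lambda_k h(x^{k+1},z).
\end{equation*}
Now I would apply \textbf{H4} (with the roles $x\leftarrow x^{k+1}$, $y\leftarrow x^k$, $z\leftarrow z$), which says $\langle z-x^{k+1},\nabla_1 d(x^{k+1},x^k)\rangle\leq D(z,x^k)-D(z,x^{k+1})-\gamma D(x^{k+1},x^k)$; rewriting the left side and combining yields
\begin{equation*}
D(z,x^{k+1})+\gamma D(x^{k+1},x^k)\leq D(z,x^k)+\lambda_k\varepsilon_k f(x^{k+1},z)+\lambda_k h(x^{k+1},z).
\end{equation*}

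The heart of the argument is then to control the two terms $\lambda_k\varepsilon_k f(x^{k+1},z)$ and $\lambda_k h(x^{k+1},z)$ using monotonicity and the subgradient data $w,p$. For the $f$-term, monotonicity $(L4)$ gives $f(x^{k+1},z)\leq -f(z,x^{k+1})=-f_z(x^{k+1})$; I would then use the Fenchel–Young inequality $\langle p,x^{k+1}\rangle\leq f_z(x^{k+1})+f_z^\ast(p)$ — more precisely, applied to $2p/\varepsilon_k$ — to produce the $f_z^\ast(2p/\varepsilon_k)$ term, and use $p\in\mathcal N_{S(f,K)}(z)$, equivalently $\sigma_{S(f,K)}(p)=\langle p,z\rangle$, to replace the linear term $\langle 2p/\varepsilon_k, z\rangle$ by $\sigma_{S(f,K)}(2p/\varepsilon_k)$; this is what generates the bracket $\tfrac12\lambda_k\varepsilon_k[f_z^\ast(2p/\varepsilon_k)-\sigma_{S(f,K)}(2p/\varepsilon_k)]$ after moving half of the $f$-term to the left-hand side. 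For the $h$-term, since $w-p\in\partial h_z(z)$ I would write the subgradient inequality $h_z(x^{k+1})\geq h_z(z)+\langle w-p,x^{k+1}-z\rangle=\langle w-p,x^{k+1}-z\rangle$ (using $h(z,z)=0$), hence $h(x^{k+1},z)=h_z(x^{k+1})$ is bounded below; but we need an upper bound, so instead I would invoke monotonicity $(L4)$ for $h$ to get $h(x^{k+1},z)\leq -h(z,x^{k+1})=-h_z(x^{k+1})\leq -\langle w-p,x^{k+1}-z\rangle=\langle w-p,z-x^{k+1}\rangle$, and then absorb the $\langle p,\cdot\rangle$ piece into the $f$-side estimate (this is precisely why the split of $\varepsilon_k f$ into two halves is arranged the way it is) while $\langle w,z-x^{k+1}\rangle$ survives as the stated term.

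The main obstacle I anticipate is the bookkeeping that ties together the $p$-contributions coming from the $h$-term and from the Fenchel–Young step on the $f$-term: one must choose the Fenchel–Young conjugate argument ($2p/\varepsilon_k$ rather than $p/\varepsilon_k$ or $p$) exactly so that, after scaling by $\lambda_k\varepsilon_k/2$, the spurious $\langle p,z-x^{k+1}\rangle$ terms cancel and only $\langle w,z-x^{k+1}\rangle$ and the conjugate/support-function bracket remain, with a clean coefficient $\lambda_k\varepsilon_k/2$ on the $f(z,x^{k+1})$ term on the left. I would also need to be careful that $w\in\partial(h_z+\delta_{S(f,K)})(z)$ and $p\in\mathcal N_{S(f,K)}(z)=\partial\delta_{S(f,K)}(z)$ with $w-p\in\partial h_z(z)$ are exactly the decomposition hypothesized, so no sum-rule issue arises — it is assumed, not derived. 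Once the cancellation is set up correctly, the rest is the routine substitution of $-f_z(x^{k+1})$ for $f(x^{k+1},z)$ and collecting terms, and the inequality follows.
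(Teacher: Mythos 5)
Your proposal is correct and follows essentially the same route as the paper: test the equilibrium inequality $f_k(x^{k+1},z)\geq 0$, apply \textbf{H4}, use monotonicity $(L4)$ of $f$ and $h$, the subgradient inequality for $w-p\in\partial h_z(z)$, Fenchel--Young at $2p/\varepsilon_k$, and the identity $\sigma_{S(f,K)}(2p/\varepsilon_k)=\langle 2p/\varepsilon_k,z\rangle$, with the same half-splitting of the $\varepsilon_k f$ term; the bookkeeping you describe does cancel exactly as claimed. The only blemish is the passing misidentification ``$h(x^{k+1},z)=h_z(x^{k+1})$'' in the aside you then discard (by definition $h_z(x^{k+1})=h(z,x^{k+1})$), which does not affect the final, correct chain of inequalities.
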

\begin{proof} From \eqref{eq-21}, assumption ${\bf(H4)}$ and monotonicity of $f$ and $h$, we have 
{\small
\[
D(z,x^{k+1})+\frac{1}{2}\lambda_k\varepsilon_k f(z,x^{k+1})\leq D(z,x^{k})-\frac{1}{2}\lambda_k\varepsilon_k f(z, x^{k+1}) - \lambda_kh(z, x^{k+1}) -\gamma D(x^{k+1}, x^k).
\]
}
Since $w-p \in\partial h_z(z)$, it follows that $h(z, x^{k+1})\geq \langle w-p, x^{k+1}-z\rangle$.
Hence,
\begin{eqnarray}
D(z,x^{k+1})+\frac{1}{2}\lambda_k\varepsilon_k f(z,x^{k+1})&\leq& D(z,x^{k})-\frac{1}{2}\lambda_k\varepsilon_k f(z, x^{k+1})
\nonumber\\
&+& \lambda_k\langle w-p, z-x^{k+1}\rangle -\gamma D(x^{k+1}, x^k).\nonumber
\end{eqnarray}
Rewriting the last inequality, we obtain
{ 
\begin{eqnarray}
D(z,x^{k+1})+\frac{\lambda_k\varepsilon_k}{2} f(z,x^{k+1})&\leq& D(z,x^{k})-\gamma D(x^{k+1}, x^k)+\lambda_k\langle w, z-x^{k+1}\rangle\nonumber\\ 
&+& \frac{\lambda_k\varepsilon_k}{2}\left[\langle\frac{2p}{\varepsilon_k},x^{k+1} \rangle - f(z,x^{k+1})-\langle \frac{2p}{\varepsilon_k}, z \rangle - \delta_{S(f,K)}(z)\right].\nonumber   
\end{eqnarray}
}
Since $z\in S(f,K)$, $\delta_{S(f,K)}(z)=0$. Moreover, we have:
\begin{itemize}
\item[(i)] $\frac{2p}{\varepsilon_k}\in \partial\delta_{S(f,K)}(z)=\mathcal{N}_{S(f,K)}(z)$, so  $\delta_{S(f,K)}^{\ast}( \frac{2p}{\varepsilon_k}) =\sigma_{S(f,K)}(\frac{2p}{\varepsilon_k})$;
\item[(ii)] $\langle \frac{2p}{\varepsilon_k}, x^{k+1} \rangle- f_z(x^{k+1})\leq f_z^{\ast}(\frac{2p}{\varepsilon_k})$.
\end{itemize}
Therefore, the desired result follows by combining (i) and (ii) with last inequality.
\end{proof}
Under notations of the Lemma~\ref{lem3-2}, let us consider the following hypothesis:
\[
(\mathcal{H}): \qquad \sum_{k=1}^{\infty} \lambda_k\varepsilon_k\left[ f_z^{\ast}(q^k)-\sigma _{S(f,K)}(q^k)\right]< \infty,
\]
where $z\in S(f,K)$ and 
\[ 
q^k\in \mathcal{R}(\mathcal{N}_{S(f,K)}):=\{q\in \mathbb{R}^n:\exists~p\in S(f,K), \mbox{with} \ q \in N_{S(f,K)}(p)\}.
\]

{\it Remark 5.2.} 
 The assumption: $(\mathcal{H})$  is a geometric condition which is similar to what was introduced in linear setting in {\cite{ACP2011}} and appears in~{\cite{CR2013}}. In~{\cite{ACP2011}}, the authors showed that in case where $f_z(q)=\frac{1}{2} \mbox{dist}(q,K)^2$, we have $f_z^{\ast}(q)-\sigma _{K}(q)=\frac{1}{2}\|q\|^2$, for all $q\in \mathbb{R}^n$ and so 
\[
(\mathcal{H})\Longleftrightarrow \sum_{k=1}^{\infty} \frac{\lambda_k}{\varepsilon_k}< \infty.
\]

\begin{theorem}\label{teo-3} 
Assume that $f,h$ satisfies $(L1)$-$(L4)$, $\lambda_k\geq \theta> 0$ and $S(f,K)\neq \emptyset$. For all $x^0 \in K$, we have the following:
\begin{description}
\item[(i)] The sequence $\{x^k\}$ generated by \emph{Algorithm~\ref{alg1}} is well defined;
\item[(ii)] If $(\mathcal{H})$ holds, then for all $z \in S(h,S(f,K))$, the following hold:
 \begin{description}
 \item[(a)] exist $\lim_{k\rightarrow \infty } D(z, x^k)$;
 \item[(b)] The sequence $\{x^k\}$ is  bounded;
  \item[(c)] $\lim_{k\rightarrow \infty }D(x^{k+1},x^{k})=0$ and $\lim_{k\rightarrow \infty }(x^{k+1}-x^{k})=0$.
  \item[(d)] $\sum_{k=1}^{\infty}\lambda_k\varepsilon_k f(z,x^{k+1})< \infty$ 
 \end{description} 
\end{description}
\end{theorem}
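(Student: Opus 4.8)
The plan is to establish (i) first, then derive (ii) from the fundamental inequality of Lemma~\ref{lem3-2}. For (i), well-definedness of $\{x^k\}$: at each step we must verify that $\mbox{S}(f_k,K)$ is nonempty, where $f_k$ is the bifunction in \eqref{eq-21} with $\bar{x}=x^k$. Since $f,h$ satisfy $(L1)$-$(L4)$ and $\nabla_1 d(\cdot,x^k)$ is strictly monotone (standing hypothesis of Section~\ref{s3}), Theorem~\ref{teo-2} guarantees that $\mbox{S}(f_k,K)$ has a unique element as soon as Condition ${\bf L6}$ holds for the data $(\varepsilon_k,\lambda_k,x^k)$. So the first task is simply to invoke the hypotheses already in force (all hypotheses of Lemma~\ref{pro1}, hence ${\bf L6}$) to apply Theorem~\ref{teo-2}(i)--(ii) with $\bar{x}=x^k$, $\varepsilon=\varepsilon_k$, $\lambda=\lambda_k$; this gives exactly one admissible $x^{k+1}\in K$, and by induction the whole sequence is well defined.

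For (ii), fix $z\in S(h,S(f,K))$. Then $z\in S(f,K)$, and there is a natural choice of $w\in\partial(h_z+\delta_{S(f,K)})(z)$, namely $w=0$, since $z$ minimizes $h_z+\delta_{S(f,K)}$ over $\mathbb R^n$ (indeed $h(z,y)\geq 0$ for all $y\in S(f,K)$ and $h(z,z)=0$, so $z$ is a global minimizer of that sum, whence $0\in\partial(h_z+\delta_{S(f,K)})(z)$). Taking $w=0$ and any $p\in\mathcal N_{S(f,K)}(z)$ with $-p\in\partial h_z(z)$ — which exists by the subdifferential sum rule applied to $h_z+\delta_{S(f,K)}$ at the minimizer $z$ — Lemma~\ref{lem3-2} yields
\[
D(z,x^{k+1})+\tfrac{\lambda_k\varepsilon_k}{2}f(z,x^{k+1})\leq D(z,x^k)-\gamma D(x^{k+1},x^k)+\tfrac{\lambda_k\varepsilon_k}{2}\bigl[f_z^{\ast}(q^k)-\sigma_{S(f,K)}(q^k)\bigr],
\]
where $q^k=2p/\varepsilon_k\in\mathcal R(\mathcal N_{S(f,K)})$. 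Now I would set $a_k:=D(z,x^k)$ and $b_k:=\tfrac{\lambda_k\varepsilon_k}{2}\bigl[f_z^{\ast}(q^k)-\sigma_{S(f,K)}(q^k)\bigr]$. Two facts are needed: first, $f(z,x^{k+1})\geq 0$ since $z\in S(f,K)$ and $x^{k+1}\in K$, so the term $\tfrac{\lambda_k\varepsilon_k}{2}f(z,x^{k+1})$ on the left is nonnegative and may be dropped to get $a_{k+1}\leq a_k-\gamma D(x^{k+1},x^k)+b_k$; second, $f_z^{\ast}(q^k)-\sigma_{S(f,K)}(q^k)\geq 0$ always (by the Fenchel–Young inequality and $\sigma_{S(f,K)}=\delta_{S(f,K)}^{\ast}\leq f_z^{\ast}$ when $f_z\geq\delta_{S(f,K)}$ on $S(f,K)$, using $f(z,\cdot)\geq 0$ there and $f(z,z)=0$), so $b_k\geq 0$. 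Under $(\mathcal H)$, $\sum_k b_k<\infty$. Then Lemma~\ref{lema0} applied with $\xi_k=a_k$, $\gamma_k=b_k$ gives convergence of $\{D(z,x^k)\}$, which is (a). Boundedness (b) follows since $D(z,x^k)$ is a bounded sequence and property ${\bf(H3)}$ says sublevel sets $\{y:D(z,y)\leq\alpha\}$ are bounded.

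For (c), telescoping the inequality $\gamma D(x^{k+1},x^k)\leq a_k-a_{k+1}+b_k$ over $k$ gives $\gamma\sum_k D(x^{k+1},x^k)\leq a_0-\lim a_k+\sum_k b_k<\infty$, hence $D(x^{k+1},x^k)\to 0$; then Proposition~\ref{lem2}(b), applied to the bounded sequences $\{x^{k+1}\}$ and $\{x^k\}$ (both in $\mbox{int}(S)$, or in $K$ as appropriate) with $D(x^{k+1},x^k)\to 0$, yields $x^{k+1}-x^k\to 0$. Finally (d): returning to the inequality without discarding the $f$-term, $\tfrac{\lambda_k\varepsilon_k}{2}f(z,x^{k+1})\leq a_k-a_{k+1}+\gamma(\cdots)+b_k$... more cleanly, $\tfrac{\lambda_k\varepsilon_k}{2}f(z,x^{k+1})\leq a_k-a_{k+1}+b_k$ after dropping $-\gamma D(x^{k+1},x^k)\leq 0$; summing over $k$ and using convergence of $\{a_k\}$ together with $\sum_k b_k<\infty$ gives $\sum_k\lambda_k\varepsilon_k f(z,x^{k+1})<\infty$.

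The main obstacle I anticipate is the bookkeeping around the choice of $w$ and $p$ and verifying the two nonnegativity/sign facts (that $f(z,x^{k+1})\geq 0$ and that $f_z^{\ast}(q^k)-\sigma_{S(f,K)}(q^k)\geq 0$) cleanly enough that Lemma~\ref{lema0} applies with genuinely nonnegative $\gamma_k$; one must be careful that $f_z^{\ast}$ is computed for the extended-valued convex function $f(z,\cdot)+\delta_K$ (or $f(z,\cdot)$ restricted to $K$), and that the Fenchel–Young bound used in step (ii) of Lemma~\ref{lem3-2}'s proof is consistent with the definition of $q^k$ lying in the range of the normal-cone map. Everything else is a routine telescoping argument combined with the already-proven Propositions~\ref{lem-1}–\ref{lem2}, Lemma~\ref{lema0}, and properties ${\bf(H3)}$, ${\bf(H4)}$.
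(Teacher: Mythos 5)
Your proposal follows essentially the same route as the paper: item (i) via Theorem~\ref{teo-2}, then $w=0$ in Lemma~\ref{lem3-2} for a $z\in S(h,S(f,K))$, Lemma~\ref{lema0} for (a), property \textbf{(H3)} for (b), the $\gamma D(x^{k+1},x^k)$ term plus Proposition~\ref{lem2} for (c), and the retained $f$-term for (d). The only blemish is your parenthetical justification that $f_z^{\ast}(q^k)-\sigma_{S(f,K)}(q^k)\geq 0$: the inequality you need is $f_z\leq\delta_{S(f,K)}$ (since $f(z,y)\leq -f(y,z)\leq 0$ for $y\in S(f,K)$ by $(L4)$), not $f_z\geq\delta_{S(f,K)}$ with $f(z,\cdot)\geq 0$ there; with that one-line correction the argument is complete.
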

\begin{proof}
Item ${\bf(i)}$  it follows from Theorem~\ref{teo-2}. For item ${\bf (ii)}$, take an arbitrary $z \in S(h,S(f,K))$, i.e., $h(z,y)\geq 0$ for all $y\in S(f,K)$.  Hence,
\[
z \in \mbox{argmin}_{y\in S(f,K)} h_z(y)\Longleftrightarrow 0\in \partial(h_z+\delta_{S(f,K)})(z).
\]
Taking $w=0$ in Lemma~\ref{lem3-2}, we have   
\begin{equation}\label{eqfej}
D(z,x^{k+1})\leq D(z,x^{k})+\frac{1}{2}\lambda_k\varepsilon_k\left[ f_z^{\ast}(\frac{2p}{\varepsilon_k})-\sigma _{S(f,K)}(\frac{2p}{\varepsilon_k})\right].
\end{equation}
Since assumption $(\mathcal{H})$ holds, by Lemma~\ref{lema0} there exists $\lim_{k\rightarrow \infty } D(z, x^k)$. In particular, $\{D(z, x^k)\}$ is a bounded set and by condition ${\bf(H3)}$, we conclude that the sequence $\{x^k\}$ is also bounded, it proves items (a) and (b). From Lemma~\ref{lem3-2}, we have
\[
\gamma D(x^{k+1}, x^k) \leq -D(z,x^{k+1})+ D(z,x^{k})+\frac{1}{2}\lambda_k\varepsilon_k\left[ f_z^{\ast}(\frac{2p}{\varepsilon_k})-\sigma _{S(f,K)}(\frac{2p}{\varepsilon_k})\right].
\]
Thus, $\lim_{k\rightarrow \infty }D(x^{k+1},x^{k})=0$. The Proposition~\ref{lem2} implies that 
$(x^{k+1}-x^{k})\rightarrow 0$ as $k\rightarrow \infty$, it complete the proof of (c). Again taking $w=0$ in Lemma~\ref{lem3-2}, we get  
\[
\frac{1}{2}\lambda_k\varepsilon_k f(z,x^{k+1})\leq D(z,x^{k})-D(z,x^{k+1}) +\frac{1}{2}\lambda_k\varepsilon_k\left[ f_z^{\ast}(\frac{2p}{\varepsilon_k})-\sigma _{S(f,K)}(\frac{2p}{\varepsilon_k})\right].
\]
Using item (a) and assumption $(\mathcal{H})$, we conclude the proof of (d).
\end{proof}

\begin{lemma} Assume that $f,h$ satisfies $(L1)$-$(L4)$ and $S(f,K)\neq \emptyset$. Let 
$\{x^{k}\}$ be the sequence generated by \emph{Algorithm~\ref{alg1}}. If $x^{k_j}\rightarrow \bar{x}$, then:
\begin{description}
 \item[(a)] $x^{k_j+1}\rightarrow \bar{x}$;
 \item[(b)] $h(y,\bar{x})\leq 0$ for all $y\in S(f,K)$;
  \item[(c)] $\bar{x}\in S(f,K)$. 
 \end{description} 
\begin{proof}
From triangular inequality, we have
\[
\|x^{k_j+1}- \bar{x} \|\leq \| x^{k_j+1}-x^{k_j}\|+\| x^{k_j}- \bar{x}\|.
\]
Taking the limit  as $k_j\rightarrow \infty$, we prove item (a). It follows of \eqref{eq-21} that 
\begin{equation}\label{eq3}
\varepsilon_k f(x^{k_j+1},y)+  h(x^{k_j+1},y)+ \frac{1}{\lambda_k} \langle \nabla_{1} d(x^{k_j+1},x^{k_j}),y-x^{k_j+1}\rangle\geq 0, \qquad \forall y\in K.
\end{equation}
Combinig the monotonicity of the bifunctions $f$ and $h$  with the last inequality, we get
\begin{eqnarray}
h(y,x^{k_j+1})&\leq& \frac{1}{\lambda_k} \langle \nabla_{1} d(x^{k_j+1},x^{k_j}),y-x^{k_j+1}\rangle \qquad \qquad \qquad \qquad \forall y\in S(f,K) \nonumber\\
&\leq& \frac{1}{\lambda_k}\left[ D(y,x^{k_j}) -D(y,x^{k_j+1})-\gamma D(x^{k_j+1}, x^{k_j})\right].\nonumber 
\end{eqnarray}
From last inequality, hypothesis $(L3)$ and using that $\lambda_k>0$, it follows 
\[
h(y,\bar{x})\leq \liminf h(y,x^{k_j+1})\leq 0, \qquad \forall\ y\in S(f,K).
\]
Since $\varepsilon_k\rightarrow \infty$, from inequality \eqref{eq3} and hyphotesis $(L2)$, we obtain   
\begin{eqnarray}
0&\leq& \limsup f(x^{k_j+1},y)+ \limsup \left[ \frac{1}{\varepsilon_k} h(x^{k_j+1},y)+ \frac{1}{\lambda_k\varepsilon_k } \langle \nabla_{1} d(x^{k_j+1},x^{k_j}),y-x^{k_j+1}\rangle\right]\nonumber\\ 
&\leq&
f(\bar{x},y), \nonumber
\end{eqnarray}
for all $y\in K$. Therefore, $\bar{x}\in S(f,K)$.
Now, using \eqref{eq3} and that $f(y,x^{k_j+1})\geq 0$, $y\in S(f,K)$, we get   
\[
0\leq h(x^{k_j+1},y)+ \frac{1}{\lambda_k} \langle \nabla_{1} d(x^{k_j+1},x^{k_j}),y-x^{k_j+1}\rangle, \qquad y\in S(f,K). 
\]
Thus,
\[
0 \leq h(x^{k_j+1},y)+ \frac{1}{\lambda_k}\left[ D(y,x^{k_j}) -D(y,x^{k_j+1})-\gamma D(x^{k_j+1}, x^{k_j})\right],\qquad y\in S(f,K). 
\]
Hence,
\[
0 \leq \limsup \left( h(x^{k_j+1},y)+\frac{ D(y,x^{k_j}) -D(y,x^{k_j+1})-\gamma D(x^{k_j+1}, x^{k_j})}{\lambda_k}  \right)  \leq h(\bar{x},y) 
\]
for all $y\in S(f,K)$.
\end{proof}

\end{lemma}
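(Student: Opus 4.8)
The plan is to establish (a), (b), (c) in sequence, reusing the machinery already built in Theorem~\ref{teo-3} and Lemma~\ref{lem3-2}. Part (a) is immediate: from the triangle inequality $\|x^{k_j+1}-\bar x\|\le \|x^{k_j+1}-x^{k_j}\|+\|x^{k_j}-\bar x\|$, the first term on the right tends to $0$ by Theorem~\ref{teo-3}(c) (which gives $x^{k+1}-x^k\to 0$ along the whole sequence, hence along the subsequence), and the second tends to $0$ by hypothesis; so $x^{k_j+1}\to\bar x$. For the remaining parts, the starting point is the defining inclusion $x^{k_j+1}\in S(f_{k_j},K)$, i.e. $\varepsilon_{k_j}f(x^{k_j+1},y)+h(x^{k_j+1},y)+\lambda_{k_j}^{-1}\langle\nabla_1 d(x^{k_j+1},x^{k_j}),y-x^{k_j+1}\rangle\ge 0$ for all $y\in K$.

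For part (c), $\bar x\in S(f,K)$, I would divide the displayed inequality by $\varepsilon_{k_j}$ and let $j\to\infty$. The term $h(x^{k_j+1},y)/\varepsilon_{k_j}$ vanishes because $\varepsilon_k\to\infty$ and $h(x^{k_j+1},y)$ stays bounded (use $(L4)$ to bound $h(x^{k_j+1},y)\le -h(y,x^{k_j+1})$ together with lsc/usc and boundedness of the iterates from Theorem~\ref{teo-3}(b), or simply note $y\mapsto h(x^{k_j+1},y)$ is controlled on the relevant bounded set). The gradient term, after applying ${\bf (H4)}$ to rewrite $\lambda_{k_j}^{-1}\langle\nabla_1 d(x^{k_j+1},x^{k_j}),y-x^{k_j+1}\rangle\le \lambda_{k_j}^{-1}[D(y,x^{k_j})-D(y,x^{k_j+1})-\gamma D(x^{k_j+1},x^{k_j})]$, is handled via $\lambda_{k_j}\ge\theta>0$ and $D(y,x^{k_j})-D(y,x^{k_j+1})\to 0$ (from Theorem~\ref{teo-3}(a), since both subsequences converge to $\bar x$ and $D(y,\cdot)$ is continuous on $\mbox{int}\,S$, or from ${\bf (H5)}$). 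Hence $\limsup_j f(x^{k_j+1},y)\ge 0$ for each fixed $y$, and by upper semicontinuity $(L2)$ of $f(\cdot,y)$ we get $f(\bar x,y)\ge 0$ for all $y\in K$, i.e. $\bar x\in S(f,K)$.

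For part (b), $h(y,\bar x)\le 0$ for all $y\in S(f,K)$: fix $y\in S(f,K)$; since $f$ is monotone ($(L4)$) and $x^{k_j+1}$ need not lie in $S(f,K)$, I instead use monotonicity of $f$ at the pair $(y,x^{k_j+1})$ together with $f(y,\cdot)$ giving $f(x^{k_j+1},y)\le -f(y,x^{k_j+1})$, but more directly I combine $(L4)$ for $h$ with the defining inequality restricted to $y\in S(f,K)$. Using $f(x^{k_j+1},y)+f(y,x^{k_j+1})\le 0$ is not quite what is needed; rather, the cleanest route is: from the defining inequality and monotonicity of both $f$ and $h$ one obtains $h(y,x^{k_j+1})\le \lambda_{k_j}^{-1}\langle\nabla_1 d(x^{k_j+1},x^{k_j}),y-x^{k_j+1}\rangle \le \lambda_{k_j}^{-1}[D(y,x^{k_j})-D(y,x^{k_j+1})-\gamma D(x^{k_j+1},x^{k_j})]$, and then pass to the $\liminf$, using $\lambda_{k_j}\ge\theta$, Theorem~\ref{teo-3}(c), and the lsc hypothesis $(L3)$ on $h(y,\cdot)$ to get $h(y,\bar x)\le\liminf_j h(y,x^{k_j+1})\le 0$.

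The main obstacle I anticipate is bookkeeping the mixed-variable semicontinuity: the defining inequality has $x^{k_j+1}$ in the \emph{first} slot of $f$ and $h$, where only upper semicontinuity is assumed, while the limiting assertions (b) and (c) want control in the first slot for $h$ and second slot for $h$ respectively — so one must be careful to use $(L4)$ to flip slots exactly where lower/upper semicontinuity is available, and to invoke the right one of $(L2)$, $(L3)$ at each limit. A secondary technical point is justifying $D(y,x^{k_j})-D(y,x^{k_j+1})\to 0$: this needs that $y\in \mbox{int}\,S$ or else a continuity/${\bf (H5)}$-type argument, since $D(y,\cdot)$ is only asserted continuous on $\mbox{int}\,S$; here one uses that both $x^{k_j}$ and $x^{k_j+1}$ converge to the same point $\bar x$ together with Theorem~\ref{teo-3}(a) which already guarantees $\lim_k D(z,x^k)$ exists for $z\in S(h,S(f,K))$, and for general $y\in S(f,K)$ one argues along the convergent subsequence directly.
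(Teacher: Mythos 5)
Your proposal is correct and follows essentially the same route as the paper's own proof: part (a) via the triangle inequality and $x^{k+1}-x^k\to 0$, and parts (b) and (c) by starting from the defining inclusion $x^{k_j+1}\in S(f_{k_j},K)$, using monotonicity ($L4$) to flip slots, bounding the gradient term through \textbf{(H4)} by differences of $D$, and passing to the limit with ($L2$)/($L3$) and $\varepsilon_k\to\infty$, $\lambda_k\geq\theta$. Your closing remarks on the mixed-variable semicontinuity and on justifying $D(y,x^{k_j})-D(y,x^{k_j+1})\to 0$ address exactly the points the paper glosses over, so no changes are needed.
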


\subsection{Convergence of a worthwhile stays and changes transition to a
bilevel equilibrium}
$\ $
\begin{definition}\label{d1} A sequence $\left\{z^{k}\right\}\subset \mathbb{R}^n$ is said to be quasi-Fej$\acute{e}$r convergent to a set $U \neq \emptyset$ with respect to the proximal distance generalized  $(d,D)$,  if there exists a non-negative summable sequence $\{\gamma_k\}$ such that, for each $k\in\mathbb{N}$,
\[ 
D(z^{k+1},u)\leq D(z^{k},u)+\gamma _k,\qquad   u \in U.
\]   
\end{definition}
\noindent

Next result is important to establish the convergence of the sequence generated by Algorithm~\ref{alg1}.
\begin{lemma}\label{lem-f} If $\left\{z^{k}\right\} \subset \mathbb{R}^n$ is quasi-Fej$\acute{e}$r convergent to a set $U \neq \emptyset$ with respect to the proximal distance generalized  $(d,D)$, then $\left\{z^{k}\right\}$ is bounded. If a cluster point $z$ of $\left\{z^{k}\right\}$ belongs to $U$, then $\lim_{k \rightarrow \infty}z^k=z.$
\end{lemma}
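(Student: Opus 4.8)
The plan is to run the standard quasi-Fej\'er argument, with the caveat that the generalized distance $D$ is neither symmetric nor a genuine metric, so the role usually played by the triangle inequality has to be taken over by the structural conditions {\bf(H3)}, {\bf(H5)} and by Propositions~\ref{lem-1}--\ref{lem2}. For boundedness, I would fix an arbitrary $u\in U$ and iterate the quasi-Fej\'er inequality of Definition~\ref{d1} over $k$; since $\{\gamma_k\}$ is a non-negative summable sequence, the values of $D$ taken along $\{z^k\}$ against the fixed point $u$ satisfy, for every $k$,
\[
D(u,z^{k})\;\le\;D(u,z^{0})+\sum_{j=0}^{k-1}\gamma_j\;\le\;D(u,z^{0})+\sum_{j=0}^{\infty}\gamma_j\;=:\;\alpha\;<\;\infty .
\]
Hence $\{z^k\}$ is contained in a set of the form $\{y:\,D(u,y)\le\alpha\}$, which condition {\bf(H3)} forces to be bounded; therefore $\{z^k\}$ is bounded.

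For the second assertion, suppose a cluster point $z$ of $\{z^k\}$ lies in $U$, and let $z^{k_j}\to z$ be a convergent subsequence. Applying the quasi-Fej\'er inequality with $u=z$ gives $\xi_{k+1}\le\xi_k+\gamma_k$ for $\xi_k:=D(z,z^k)$, with $\sum_k\gamma_k<\infty$, so by Lemma~\ref{lema0} the sequence $\{D(z,z^k)\}$ converges to some $\ell\ge 0$. On the other hand, since $z^{k_j}\to z$ with $z\in U\subset S$, condition {\bf(H5)} yields $D(z,z^{k_j})\to 0$; as the full sequence $\{D(z,z^k)\}$ is convergent, its limit must coincide with the limit of this subsequence, i.e. $\ell=0$, so $D(z,z^k)\to 0$. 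To pass from this to $z^k\to z$ in the Euclidean sense I would invoke Proposition~\ref{lem-1} (or Proposition~\ref{lem2}(b)) with the constant sequence $z$ on one side and $\{z^k\}$ on the other: since the generalized distance between them tends to $0$ and one of the two sequences is (trivially) convergent, the other converges to the same limit, which gives $\lim_{k\to\infty}z^k=z$.

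I do not expect a genuine obstacle in any of this; the only things that need care are (i) the orientation of the two arguments of $D$, which must be kept consistent with the way {\bf(H3)}, {\bf(H5)} and Propositions~\ref{lem-1}--\ref{lem2} are phrased (and with the quasi-Fej\'er estimate produced in Theorem~\ref{teo-3}), and (ii) the implicit requirement that $\{z^k\}\subset\mathrm{int}\,S$ — valid for the sequence generated by Algorithm~\ref{alg1} — which is precisely what makes conditions {\bf(H3)}, {\bf(H5)} and the two propositions applicable. Beyond that, the whole proof is a routine iteration of the basic inequality together with three off-the-shelf facts about generalized distances.
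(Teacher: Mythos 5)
Your argument is correct, but it is genuinely different from what the paper does: the paper offers no proof of Lemma~\ref{lem-f} at all, simply citing Iusem, Svaiter and Teboulle~\cite{IST1994} (a result stated there for entropy-like/Bregman proximal terms), whereas you supply a self-contained argument adapted to the $(d,D)$ framework, built from the iterated quasi-Fej\'er inequality, \textbf{(H3)} for boundedness, Lemma~\ref{lema0} for convergence of $\{D(z,z^k)\}$, \textbf{(H5)} to identify the limit as $0$ along the convergent subsequence, and Proposition~\ref{lem-1} to upgrade $D(z,z^k)\to 0$ to $z^k\to z$. This is exactly the chain of facts the paper has assembled in Section~2 but never actually deploys for this lemma, so your version is arguably the proof the authors intended the reader to reconstruct. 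Two caveats you already flag are worth making explicit rather than parenthetical: first, Definition~\ref{d1} as printed puts the iterates in the \emph{first} slot of $D$ ($D(z^{k+1},u)\le D(z^{k},u)+\gamma_k$), while the inequality \eqref{eqfej} actually established in Theorem~\ref{teo-3} and the hypotheses \textbf{(H3)}, \textbf{(H5)}, Proposition~\ref{lem-1} all require the iterates in the \emph{second} slot; your choice $D(u,z^k)$ is the only one under which the lemma is usable, so the definition, not your proof, is what needs correcting. Second, the lemma as stated allows $\{z^k\}\subset\mathbb{R}^n$ and an arbitrary $U\neq\emptyset$, but the argument needs $\{z^k\}\subset\mathrm{int}\,S$, $U\subset S$, and $D(u,z^0)<\infty$; these hold for the sequence of Algorithm~\ref{alg1} and for $U=S(h,S(f,K))$, but should be added as hypotheses if the lemma is to stand on its own.
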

\noindent
\begin{proof}
The proof is presented by Iusem et al. in~\cite{IST1994}.  
\end{proof}

Let us show that the whole sequence $\left\{x^{k}\right\}$ converges to a solution of \eqref{BEP}. 

\begin{theorem}\label{teo-4} Under assumptions of \emph{Theorem~\ref{teo-3}}. The whole sequence $\{x^{k}\}$, generated by \emph{Algorithm~\ref{alg1}}, converges to a solution of \eqref{BEP}. 

\end{theorem}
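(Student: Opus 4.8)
The plan is to combine the quasi-Fej\'er machinery (Lemma~\ref{lem-f}) with the cluster-point analysis already carried out in the preceding lemma. First I would observe that Theorem~\ref{teo-3}(ii)(b) guarantees the sequence $\{x^k\}$ is bounded, so it has at least one cluster point $\bar{x}$; say $x^{k_j}\to\bar{x}$. By the preceding lemma applied along this subsequence, $x^{k_j+1}\to\bar{x}$, $\bar{x}\in S(f,K)$, and $h(y,\bar{x})\le 0$ together with $h(\bar{x},y)\ge 0$ for all $y\in S(f,K)$. In particular $\bar{x}\in S(h,S(f,K))$, i.e. $\bar{x}$ solves \eqref{BEP}.

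Next I would upgrade convergence along the subsequence to convergence of the whole sequence. The inequality \eqref{eqfej} in the proof of Theorem~\ref{teo-3}, namely
\[
D(z,x^{k+1})\leq D(z,x^{k})+\tfrac{1}{2}\lambda_k\varepsilon_k\left[ f_z^{\ast}(\tfrac{2p}{\varepsilon_k})-\sigma _{S(f,K)}(\tfrac{2p}{\varepsilon_k})\right],
\]
valid for every $z\in S(h,S(f,K))$, says precisely that $\{x^k\}$ is quasi-Fej\'er convergent (in the sense of Definition~\ref{d1}) to the set $U:=S(h,S(f,K))$ with respect to $(d,D)$, the summable sequence being $\gamma_k:=\tfrac12\lambda_k\varepsilon_k[f_z^{\ast}(2p/\varepsilon_k)-\sigma_{S(f,K)}(2p/\varepsilon_k)]$, whose summability is exactly hypothesis $(\mathcal{H})$. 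Since $\bar{x}\in U$ is a cluster point of $\{x^k\}$, Lemma~\ref{lem-f} yields $\lim_{k\to\infty}x^k=\bar{x}$, and $\bar{x}$ solves \eqref{BEP} by the first step. That finishes the argument.

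The one technical point requiring care is reconciling the notion of convergence: Lemma~\ref{lem-f} is stated for quasi-Fej\'er convergence with respect to the generalized pair $(d,D)$, where $D$ need not be a genuine metric, so ``cluster point'' should be understood in the ordinary Euclidean sense (which is the sense in which $x^{k_j}\to\bar{x}$ was extracted from boundedness), and the conclusion $x^k\to\bar{x}$ is likewise Euclidean. This is consistent because boundedness of $\{x^k\}$ comes from $(H3)$ applied to the bounded set $\{D(z,x^k)\}$, and the identification of the limit with $\bar{x}\in U$ is what Lemma~\ref{lem-f} delivers; properties $(H5)$ and $(H7)$ (via Propositions~\ref{lem-1}--\ref{lem2}) are what make these two notions compatible. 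I do not expect a serious obstacle here: the real content has already been done in Theorem~\ref{teo-3} and the preceding lemma, and the final theorem is essentially the bookkeeping step of invoking Lemma~\ref{lem-f}.

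\begin{proof}
By Theorem~\ref{teo-3}(ii)(b) the sequence $\{x^k\}$ is bounded, hence admits a subsequence $x^{k_j}\to\bar{x}$ for some $\bar{x}\in K$. Applying the preceding lemma along $\{k_j\}$ we obtain $\bar{x}\in S(f,K)$ and $h(\bar{x},y)\ge 0$ for all $y\in S(f,K)$; thus $\bar{x}$ solves \eqref{BEP}, i.e. $\bar{x}\in S(h,S(f,K))=:U$. On the other hand, inequality \eqref{eqfej} (from the proof of Theorem~\ref{teo-3}, with $w=0$) reads, for every $z\in U$,
\[
D(z,x^{k+1})\leq D(z,x^{k})+\gamma_k,\qquad \gamma_k:=\tfrac{1}{2}\lambda_k\varepsilon_k\left[ f_z^{\ast}\!\left(\tfrac{2p}{\varepsilon_k}\right)-\sigma _{S(f,K)}\!\left(\tfrac{2p}{\varepsilon_k}\right)\right],
\]
and $\sum_k\gamma_k<\infty$ by $(\mathcal{H})$. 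Hence $\{x^k\}$ is quasi-Fej\'er convergent to $U$ with respect to $(d,D)$ in the sense of Definition~\ref{d1}. Since $\bar{x}\in U$ is a cluster point of $\{x^k\}$, Lemma~\ref{lem-f} gives $\lim_{k\to\infty}x^k=\bar{x}$, and $\bar{x}$ solves \eqref{BEP}.
\end{proof}
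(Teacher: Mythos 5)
Your proposal is correct and follows essentially the same route as the paper: quasi-Fej\'er convergence of $\{x^k\}$ to $S(h,S(f,K))$ via \eqref{eqfej} and $(\mathcal{H})$, combined with Lemma~\ref{lem-f}. The only difference is that you spell out explicitly (via boundedness from Theorem~\ref{teo-3} and the cluster-point lemma preceding Definition~\ref{d1}) why a cluster point lying in $S(h,S(f,K))$ exists, a step the paper merely asserts.
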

\noindent
\begin{proof}

From~\eqref{eqfej} and Lemma~\ref{lema0}, we obtain that $\{x^k\}$  is quasi-Fej$\acute{e}$r convergent to a set $S(h,S(f,K)) \neq \emptyset$ with respect to generalized proximal distance $(d,D)$. Moreover there exists a cluster point $\bar{x}$ of $\{x^k\}$ such that $\bar{x}\in S(h,S(f,K))$. Thus, by Lemma~\ref{lem-f}, we concluded the proof.
\end{proof}

\subsubsection{The stability and change dynamics of hierarchical
organizations}

From the viewpoint behavioral, we have the following findings:

\begin{itemize}
\item [a)] Our application shows that a succession of worthwhile
temporary stays and changes from a current stationary trap to the next one
converges to a bilevel equilibrium, for an organization which can change
(bargain), within some bounds, each period, its sharing rules $\varepsilon
_{k}>0,\xi _{k+1}=1/\lambda _{k}$ between the leader and workers VR
advantages to change payoffs and their VR inconvenients to change;

\item [b)] Setting bounded sharing rules (which can change within
bounds) allows convergence of the allocation of tasks to a stable
hierarchical weak variational trap. The striking point is that the formation
of habitual tasks can occur even in a non stationary environment. Each
period, bargaining over payoffs destabilizes the current weak variational
trap, and the process goes on, until reaching, at the end, a hierarchical
(bilevel) equilibrium. However, each period, the allocation of tasks becomes
more and more similar, ending in a routinized allocation of tasks.
\end{itemize}


\end{document}